\newtheorem{theorem}{Theorem}[section]
\newtheorem{corollary}[theorem]{Corollary}
\newtheorem{lemma}[theorem]{Lemma}
\newtheorem{definition}[theorem]{Definition}
\newtheorem{remark}[theorem]{Remark}
\newtheorem{proposition}[theorem]{Proposition}
\numberwithin{equation}{section}
\def\bbf{{\mathbb F}}
\def\bn{{\mathbb N}}
\def\bq{{\mathbb Q}}
\def\br{{\mathbb R}}
\def\bz{{\mathbb Z}}
\title{Local Descriptions of Roots of Cubic Equations over P-adic Fields}
\author{Mansoor Saburov and Mohd Ali Khameini Ahmad}
\begin{document}
\maketitle

%\address{Mansoor Saburov\\
%Department of Computational \& Theoretical Sciences \\
%Faculty of Science, International Islamic University Malaysia\\
%P.O. Box, 25200, Kuantan\\
%Pahang, Malaysia} \email{{\tt msaburov@@gmail.com}}

%\author{}
%\address{MOHD ALI KHAMEINI AHMAD\\
%Department of Computational \& Theoretical Sciences \\
%Faculty of Science, International Islamic University Malaysia\\
%P.O. Box, 25200, Kuantan\\
%Pahang, Malaysia} \email{{\tt khameini.ahmad@@gmail.com}}

\begin{abstract}
The most frequently asked question in the $p-$adic lattice models of statistical mechanics is that whether a root of a polynomial equation belongs to domains $\mathbb{Z}_p^{*}, \ \mathbb{Z}_p\setminus\mathbb{Z}_p^{*}, \ \mathbb{Z}_p, \ \mathbb{Q}_p\setminus\mathbb{Z}_p^{*}, \ \mathbb{Q}_p\setminus\left(\mathbb{Z}_p\setminus\mathbb{Z}_p^{*}\right), \ \mathbb{Q}_p\setminus\mathbb{Z}_p, \ \mathbb{Q}_p $ or not. However, this question was open even for lower degree polynomial equations. In this paper, we give local descriptions of roots of cubic equations over the $p-$adic fields for $p>3$.

\vskip 0.3cm
\noindent {\it Mathematics Subject Classification}: 11Sxx 11Dxx \\
{\it Key words}: Cubic equation; $p-$adic field; number of roots.

\end{abstract}

\section{Introduction}	

The fields of $p-$adic numbers were introduced by German
mathematician K. Hensel \cite{Hen}. The $p-$adic numbers were
motivated primarily by an attempt to bring the ideas and techniques
of the power series into number theory. Their canonical
representation is analogous to the expansion of analytic functions
into power series. This is one of the manifestations of the analogy
between algebraic numbers and algebraic functions. Over the last century, $p-$adic numbers and $p-$adic analysis have
come to play a central role in modern number theory. This importance
comes from the fact that they afford a natural and powerful language
for talking about congruences between integers, and allow using the
methods borrowed from analysis for studying such problems.
 More recently, numerous applications of $p-$adic numbers have shown
up in theoretical physics and quantum mechanics (see for example,
\cite{FreWit}, \cite{Khren91,Khren94}, \cite{MarPar}-\cite{MH2014}, \cite{VladVolZel,Vol}).

Finding roots of polynomials is among the old problem of mathematics. In the filed of real numbers, this problem found its own solution. However, to the best of our knowledge, in the field of $p-$adic numbers -- in the counterpart of the field of real number, the less attention was paid for this problem in the literature. Recently, by concerning with some problems of $p-$adic lattice models of statistical mechanics, this problem is again raised up. The scenario is completely different from the field of real numbers to the field of $p-$adic numbers. For instance, the quadratic equation $x^2+1=0$ is not solvable in the real field but solvable in the  $p-$adic field for $p\equiv 1 \ (mod \ 4)$. Vise versa, the cubic equation $x^3+p=0$ is not solvable in the $p-$adic field but solvable in the real field. Therefore, it is of independent interest to provide a solvability criterion for lower degree polynomial equations over the $p-$adic field. The solvability criterion for quadratic equations over the $p-$adic field was provided in all classical $p-$adic analysis books. Recently, in the series of papers \cite{FMBOMS,FMBOMSKM}, \cite{SMAA2013}-\cite{SMAA2014b}, the solvability criterion and the number of roots of cubic equations over the $p-$adic field were studied. This paper is a continuation of previous studies and we are aiming to locally describe all roots of cubic equations over the $p-$adic field for $p>3$. Applications of quadratic and cubic equations in the $p-$adic lattice models of statistical mechanics were presented in the papers \cite{M2}-\cite{MH2014}, \cite{RK}, \cite{SMAA2014e}. The solvability criterion and the number of roots of bi-quadratic equations are also studied in the papers \cite{SMAA2014c,SMAA2014d}.

\section{Preliminaries}

For a fixed prime $p$, $\bq_p$ denotes the field of $p-$adic
numbers, it is a completion of the rational numbers $\bq$ with
respect to the non-Archimedean norm $|\cdot|_p:\bq\to\br$ given by
\begin{eqnarray}
|x|_p=\left\{
\begin{array}{c}
  p^{-r} \ x\neq 0,\\
  0,\ \quad x=0,
\end{array}
\right.
\end{eqnarray}
here, $x=p^r\frac{m}{n}$ with $r,m\in\bz,$ $n\in\bn$,
$(m,p)=(n,p)=1$. A number $r$ is called \textit{a $p-$order} of $x$
and it is denoted by $ord_p(x)=r.$

Any $p-$adic number $x\in\bq_p$ can be uniquely represented as the
following canonical form
\begin{eqnarray*}
x=p^{ord_p(x)}\left(x_0+x_1\cdot p +x_2\cdot p^2+\cdots \right)
\end{eqnarray*}
where $x_0\in \{1,2,\cdots p-1\}$ and $x_i\in\{0,1,2,\cdots p-1\}$,
$i\geq 1,$ (see \cite{Bor Shaf}, \cite{NK})

We respectively denote the set of all {\it $p-$adic integers} and {\it units} of
$\bq_p$ by
$$\bz_p=\{x\in\bq_{p}: |x|_p\leq1\}, \quad \bz_p^{*}=\{x\in\bq_{p}: |x|_p=1\}.$$

Any nonzero $p-$adic number $x\in\bq_p$ has a unique representation of the form $x =\cfrac{x^{*}}{|x|_p}$, where $x^{*}\in\bz_p^{*}$.

Let $p$ be a prime number, $\mathbb{F}_p=\{\bar{0},\bar{1},\cdots, \overline{p-1}\}$ be a finite field, $q\in\bn$, $a\in\bbf_p$ with $a\neq \bar{0}.$  The number $a$ is called \textit{the $q$-th power
residue modulo $p$} if the the following equation
\begin{eqnarray}\label{kthresidue}
x^q=a
\end{eqnarray} has a solution in $\bbf_p$.

\begin{proposition}[\cite{Ros}]\label{aisresidueofp}
Let $p$ be an odd prime, $q\in\bn$, $d=(q,p-1),$ and
$a\in\bbf_p$ with $a\neq\bar{0}.$ Then the following statements
hold:
\begin{itemize}
  \item [(i)] $a$ is the $q$-th power residue modulo $p$ if and only if one has
$a^{\frac{p-1}{d}}=\bar{1};$
  \item [(ii)] If $a^{\frac{p-1}{d}}=\bar{1}$ then the equation \eqref{kthresidue} has $d-$number of solutions in $\bbf_p$.
\end{itemize}
\end{proposition}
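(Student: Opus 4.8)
The plan is to reduce the statement to the cyclic structure of the multiplicative group $\bbf_p^{*}$, which has order $p-1$ because $p$ is prime. Once that structure is in place, both assertions become statements about a single linear congruence.

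First I would fix a primitive root $g$ modulo $p$, so that every nonzero class is written uniquely as $a=g^{k}$ with $k\in\{0,1,\dots,p-2\}$, and recall that the order of $g^{j}$ in $\bbf_p^{*}$ equals $(p-1)/(j,p-1)$. Seeking a solution of \eqref{kthresidue} in the form $x=g^{m}$, the equation $x^{q}=a$ is equivalent to the linear congruence
\[
qm\equiv k \pmod{p-1}.
\]
This is the only point at which the hypothesis on $a$ really enters; everything afterwards is elementary bookkeeping about this congruence.

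The second step is the classical theory of linear congruences: $qm\equiv k \pmod{p-1}$ has a solution in $m$ if and only if $d=(q,p-1)$ divides $k$, and in that case it has exactly $d$ solutions pairwise incongruent modulo $p-1$. Since distinct residues of $m$ modulo $p-1$ produce distinct powers $g^{m}$ in $\bbf_p^{*}$, this immediately gives the counting assertion in (ii): whenever \eqref{kthresidue} is solvable, it has precisely $d$ roots in $\bbf_p$. The third step is to match the divisibility condition $d\mid k$ with the stated criterion: because $g$ has order $p-1$ and $a=g^{k}$, we have $a^{\frac{p-1}{d}}=g^{k\frac{p-1}{d}}=\bar 1$ if and only if $(p-1)\mid k\frac{p-1}{d}$, i.e. if and only if $d\mid k$. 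Chaining this equivalence with the solvability condition from the second step proves (i), and combined with the count of solutions completes (ii).

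I do not anticipate a genuine obstacle here: this is the standard Euler-type criterion for $q$-th power residues, cited in the excerpt from \cite{Ros}. The only points that deserve a little care are invoking the existence of a primitive root modulo an odd prime (equivalently, the cyclicity of $\bbf_p^{*}$), and verifying that the $d$ solution classes of the congruence genuinely yield $d$ \emph{distinct} residues, so that the number of roots is exactly $d$ rather than merely at most $d$.
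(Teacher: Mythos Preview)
Your argument is correct and is exactly the standard primitive-root proof of Euler's criterion for $q$-th power residues. Note, however, that the paper does not actually prove this proposition: it is stated as a quotation from \cite{Ros} without proof, so there is nothing to compare against beyond observing that your argument is the one found in that reference.
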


The solvability criterion for the following monomial equation in $\bq_p$
\begin{eqnarray}\label{x^q=a}
x^q=a,
\end{eqnarray}
where $q\in\bn$, $a\in \bq_p$ with $a\neq 0$, was provided in \cite{FMMS}.

\begin{proposition}[\cite{FMMS}]\label{Criterionforp}
Let $p$ be an odd prime, $q\in\bn,$ $a\in \bq_p,$
$a=\frac{a^{*}}{|a|_p}$ and $a^{*}\in\bz_p^{*}$ with
$a^{*}=a_0+a_1\cdot p+\cdots$. Then the following
statements hold true:
\begin{itemize}
  \item [(i)] If $(q,p)=1$ then the equation \eqref{x^q=a} is solvable in $\bq_p$ if and only if $a_0^{\frac{p-1}{(q,p-1)}}\equiv 1 \ ( mod \ p)$ and $\log_p{|a|_p}$ is divisible by $q$
  \item [(ii)] If $(q,p)=1$, $a_0^{\frac{p-1}{(q,p-1)}}\equiv 1 \ ( mod \ p)$ and $\log_p{|a|_p}$ is divisible by $q$ then the equation \eqref{x^q=a} has $(q,p-1)-$number of roots in $\bq_p$.
  \item [(iii)] If $(q,p)=1$ then the equation $y^q\equiv a^{*} \ (mod \ p^{k})$ has a root in $\bz_p^{*}$ for some $k\in\bn$ if and only if $a_0^{\frac{p-1}{(q,p-1)}}\equiv 1 \ ( mod \ p).$
  \item [(iv)] If $q=m\cdot p^s$ with $(m,p)=1$, $s\geq 0$ then the equation \eqref{x^q=a}
  is solvable in $\bq_p$ if and only if $a_0^{\frac{p-1}{(m,p-1)}}\equiv 1 \ ( mod \ p)$, $a^{p^s}_0\equiv a \ (mod \ p^{s+1})$ and $\log_p{|a|_p}$ is divisible by $q$.
\end{itemize}
\end{proposition}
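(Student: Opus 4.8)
The plan is to reduce \eqref{x^q=a} to a question about $q$-th powers of units in $\bz_p$ and then to combine Hensel's lemma with Proposition \ref{aisresidueofp}. First observe that a solution $x\in\bq_p$ of $x^q=a$ must satisfy $q\cdot ord_p(x)=ord_p(a)$, so in every case solvability forces $q\mid ord_p(a)$, equivalently $q\mid\log_p|a|_p=-ord_p(a)$. Granting this, write $a=\frac{a^{*}}{|a|_p}$ with $a^{*}\in\bz_p^{*}$ and put $x=y\,p^{ord_p(a)/q}$; then $x^q=a$ in $\bq_p$ is equivalent to $y^q=a^{*}$ in $\bz_p^{*}$. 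So it is enough to describe the $q$-th powers among units, and, since the quotient of any two solutions of $x^q=a$ is a $q$-th root of unity, the count of roots in $\bq_p$ reduces to counting $q$-th roots of unity in $\bz_p^{*}$.

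For parts (i) and (iii), where $(q,p)=1$, I would apply Hensel's lemma to $f(y)=y^q-a^{*}$: since $f'(y)=qy^{q-1}$ is a $p$-adic unit for every $y\in\bz_p^{*}$, a solution of $y_0^q\equiv a_0\pmod p$ in $\bbf_p$ lifts uniquely to a solution in $\bz_p^{*}$, while conversely any root in $\bz_p^{*}$ — or merely modulo some $p^k$ — reduces modulo $p$. By Proposition \ref{aisresidueofp}(i) the congruence $y_0^q\equiv a_0\pmod p$ is solvable exactly when $a_0^{\frac{p-1}{(q,p-1)}}\equiv1\pmod p$, which is (iii), and together with the order condition this gives (i). For (ii), Proposition \ref{aisresidueofp}(ii) yields exactly $d=(q,p-1)$ residues $y_0$, each with a unique Hensel lift; the same reasoning applied to $y^q=1$ shows there are exactly $(q,p-1)$ $q$-th roots of unity in $\bz_p^{*}$, so once \eqref{x^q=a} is solvable it has precisely $(q,p-1)$ roots.

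For part (iv), write $q=mp^s$ with $(m,p)=1$. Here I would use the decomposition $\bz_p^{*}\cong T\times(1+p\bz_p)$, valid for $p$ odd, where $T$ is the cyclic group of $(p-1)$-th roots of unity. Because $(p^s,p-1)=1$ the $p^s$-th power map is surjective on $T$, and because $m$ is a unit of $\bz_p$ the $m$-th power map is surjective on $1+p\bz_p$; computing in each factor then shows that a unit is a $q$-th power if and only if it is simultaneously an $m$-th power and a $p^s$-th power. The $m$-th power condition is part (iii): $a_0^{\frac{p-1}{(m,p-1)}}\equiv1\pmod p$. For the $p^s$-th power condition I would use the identity $(1+p\bz_p)^{p^s}=1+p^{s+1}\bz_p$ (this is where $p>2$ matters), so that $a^{*}$ is a $p^s$-th power iff $a^{*}\equiv\omega(a_0)\pmod{p^{s+1}}$, $\omega(a_0)$ being the Teichm\"uller representative of $a_0$; the elementary congruences $a_0^{p^{j+1}}\equiv a_0^{p^{j}}\pmod{p^{j+1}}$ (induction from Fermat's little theorem) give $\omega(a_0)\equiv a_0^{p^s}\pmod{p^{s+1}}$, so the condition becomes $a_0^{p^s}\equiv a^{*}\pmod{p^{s+1}}$. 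Together with $q\mid\log_p|a|_p$ this is assertion (iv).

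I expect part (iv) to be the delicate point: one must verify carefully that the intersection of the $m$-th powers and the $p^s$-th powers is exactly the set of $q$-th powers — this is precisely where $p>2$ and $(p^s,p-1)=1$ enter — and that the genuinely $p$-adic statement ``$a^{*}$ is a $p^s$-th power'' is recorded faithfully by the elementary congruence $a_0^{p^s}\equiv a^{*}\pmod{p^{s+1}}$ through the Teichm\"uller lift. The remaining parts are routine once the reduction to units and Hensel's lemma are in hand.
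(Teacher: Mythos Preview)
The paper does not prove this proposition: it is quoted verbatim from \cite{FMMS} and no argument is given here, so there is no in-paper proof to compare your sketch against. Your approach --- reduce to units via the valuation condition $q\mid ord_p(a)$, invoke Hensel's lemma together with Proposition~\ref{aisresidueofp} for the tame parts (i)--(iii), and for (iv) split $\bz_p^{*}\cong\mu_{p-1}\times(1+p\bz_p)$ and compute the $q$-th power map on each factor --- is the standard one and is correct. Two minor remarks: your condition in (iv), $a_0^{p^s}\equiv a^{*}\pmod{p^{s+1}}$, is the right formulation (the paper's ``$\equiv a$'' should read $a^{*}$); and your claim that the $q$-th powers of units coincide with the intersection of the $m$-th powers and the $p^s$-th powers is exactly what the factor-by-factor computation shows, since raising to the $p^s$-th power is bijective on $\mu_{p-1}$ and raising to the $m$-th power is bijective on $1+p\bz_p$, so nothing further is hidden there.
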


Throughout this paper, we always assume that $p>3$ unless otherwise mentioned. 

Let $a\in \bq_p$ be a nonzero $p-$adic number such that $a=\frac{a^{*}}{|a|_p}$ and $a^{*}\in\bz_p^{*}$ with
$a^{*}=a_0+a_1\cdot p+a_2\cdot p^2+\cdots$.

\begin{definition}
We say that there exists $\sqrt[q]{a}$, written $\sqrt[q]{a}-\exists,$ if the monomial equation \eqref{x^q=a} is solvable in $\mathbb{Q}_p$. Particularly, we say that there exists $\sqrt{a}$, written $\sqrt{a}-\exists,$ if $a_0^{\frac{p-1}{2}}\equiv 1 \ ( mod \ p)$ and $\log_p{|a|_p}$ is even. Similarly, we say that there exists $\sqrt[3]{a}$, written $\sqrt[3]{a}-\exists,$ if $a_0^{\frac{p-1}{(3,p-1)}}\equiv 1 \ ( mod \ p)$ and $\log_p{|a|_p}$ is divisible by 3. 
\end{definition}

\section{The Main Problems and Results}\label{ProblemsandResults}

Generally, we may come across the following problem in one form or another: \textit{provide a solvability criterion for the polynomial equation over the given set $\mathbb{A}\subset\mathbb{Q}_p$.} 

In this paper, we study the depressed cubic equation
\begin{equation}\label{1.1}
x^3+ax=b
\end{equation}
over $\mathbb{Q}_p$, where $a,b \in \mathbb{Q}_p$, $ab\neq 0$, and $p>3$. 

Let $\mathbb{A}, \ \mathbb{B}\subset\mathbb{Q}_p$ be two nonempty disjoint sets, i.e., $\mathbb{A}\cap \mathbb{B}=\emptyset$

\begin{definition}[Solvability in $\mathbb{A}$]
We say that the cubic equation \eqref{1.1} is solvable in $\mathbb{A}$ if at least one of its roots belongs to $\mathbb{A}$.
\end{definition}

\begin{definition}[Solvability in $\mathbb{A}^{[n]}$]
We say that the cubic equation \eqref{1.1} is solvable in $\mathbb{A}^{[n]}$ if it has exactly  $n-$number of roots (including multiplicity) and all of them belong to the set $\mathbb{A}$, where $n=1$ or $3$.
\end{definition}

\begin{definition}[Solvability in $\mathbb{A}^{[n]} \sqcup \mathbb{B}^{[m]}$]
We say that the cubic equation \eqref{1.1} is solvable in $\mathbb{A}^{[n]} \sqcup \mathbb{B}^{[m]}$ if the $n-$number of its roots (including multiplicity) belong to $\mathbb{A}$ and the $m-$number of its roots (including multiplicity) belong to $\mathbb{B}$, where $m+n=3$ and $m,n>0$.
\end{definition}

The main problems of the paper are the following

\begin{itemize}
\item[1.] Provide solvability criteria for the cubic equation \eqref{1.1} in $\mathbb{A}$, where $$\mathbb{A}\in\left\{\mathbb{Z}_p^{*}, \ \ \mathbb{Z}_p\setminus\mathbb{Z}_p^{*}, \ \ \mathbb{Z}_p, \ \ \mathbb{Q}_p\setminus\mathbb{Z}_p^{*}, \ \ \mathbb{Q}_p\setminus\left(\mathbb{Z}_p\setminus\mathbb{Z}_p^{*}\right), \ \ \mathbb{Q}_p\setminus\mathbb{Z}_p, \ \ \mathbb{Q}_p\right\};$$
\item[2.] Provide the number $\mathbf{N}_{\mathbb{A}}(x^2+ax-b)$ of roots of the cubic equation \eqref{1.1} where
$$\mathbb{A}\in\left\{\mathbb{Z}_p^{*}, \ \ \mathbb{Z}_p\setminus\mathbb{Z}_p^{*}, \ \ \mathbb{Z}_p, \ \ \mathbb{Q}_p\setminus\mathbb{Z}_p^{*}, \ \ \mathbb{Q}_p\setminus\left(\mathbb{Z}_p\setminus\mathbb{Z}_p^{*}\right), \ \ \mathbb{Q}_p\setminus\mathbb{Z}_p, \ \ \mathbb{Q}_p\right\};$$
\item[3.] Provide solvability criteria for the cubic equation \eqref{1.1} in $\mathbb{A}^{[n]}$, \ $\mathbb{A}^{[n]} \sqcup \mathbb{B}^{[m]}$ where
 $$\mathbb{A}, \ \mathbb{B}\in\left\{\mathbb{Z}_p^{*}, \ \ \mathbb{Z}_p\setminus\mathbb{Z}_p^{*}, \ \ \mathbb{Q}_p\setminus\mathbb{Z}_p\right\};$$
\end{itemize}

The solution of the third problem gives a local description of roots of the cubic equation \eqref{1.1}.  In order to solve it, we have to solve the first and second problems for domains $\mathbb{Z}_p^{*}, \ \mathbb{Z}_p\setminus\mathbb{Z}_p^{*}, \ \mathbb{Q}_p\setminus\mathbb{Z}_p.$ The first and second problems for domains $\mathbb{Z}_p^{*}, \ \mathbb{Z}_p, \ \mathbb{Q}_p$ were already studied in the paper \cite{FMBOMS}. Therefore, it is enough to study the first and second problems in domains $\mathbb{Z}_p\setminus\mathbb{Z}_p^{*}, \ \mathbb{Q}_p\setminus\mathbb{Z}_p.$ For the quadratic equation, all these problems were studied in the paper \cite{SMAA2014e}. 

The solvability criterion for the cubic equation \eqref{1.1} in $\mathbb{Q}_p$ was given  in \cite{FMBOMS}.

Since $ab\neq 0,$ we have that $a=\cfrac{a^{*}}{|a|_p}$ and $b=\cfrac{b^{*}}{|b|_p}$, where
\begin{eqnarray*}
a^{*}&=&a_0+a_1p+a_2p^2+\cdots\\
b^{*}&=&b_0+b_1p+b_2p^2+\cdots
\end{eqnarray*}
where $a_0,b_0\in\{1,2,\cdots p-1\}$ and $a_i,b_i\in\{0,1,2,\cdots p-1\}$ for any $i\in\mathbb{N}.$
We set $D_0=-4a_0^3-27b_0^2$ and $u_{n+3}=b_0u_n-a_0u_{n+1}$ with $u_1=0,$ $u_2=-a_0,$ and $u_3=b_0$ for $n=\overline{1,p-3}$. 

%Recall that $\sqrt[3]{b}$ exists if and only if  $b_0^{\frac{p-1}{(3,p-1)}}\equiv 1 \ (mod \ p)$ and $\log_p|b|_p$ is divisible by 3. We shall use the notation $\sqrt[3]{b}-\exists$ whenever there exists $\sqrt[3]{b}$. Meanwhile, there exists $\sqrt{-a}$ if and only if  $(-a_0)^{\frac{p-1}{2}}\equiv 1 \ (mod \ p)$ and $\log_p|a|_p$ is even. We shall use the notation $\sqrt{-a}-\exists$ whenever there exists $\sqrt{-a}$.

\begin{theorem}[Solvability Domain, \cite{FMBOMS}]\label{MainResult1}
Let $p>3$. The cubic equation \eqref{1.1} is solvable in $\mathbb{Q}_p$ if and only if either one of the following conditions holds true:
\begin{itemize}
\item [$1.$] $|a|_p^3<|b|_p^2,$  and \  $\sqrt[3]{b}-\exists;$
\item [$2.$] $|a|_p^3=|b|_p^2$ and $D_0u_{p-2}^2\not\equiv 9a_0^{2} \ (mod \ p);$
\item [$3.$] $|a|_p^3>|b|_p^2.$
\end{itemize}
\end{theorem}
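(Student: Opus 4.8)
The plan is a trichotomy governed by the Newton polygon of $f(x)=x^3+ax-b$, i.e.\ by comparing $|a|_p^3$ with $|b|_p^2$. The starting observation is that if $x_0\in\mathbb{Q}_p$ is a root and $\gamma=\mathrm{ord}_p(x_0)$, then balancing the $p$-adic valuations of $x_0^3$, $ax_0$ and $b$ in $x_0^3+ax_0=b$ is possible only as follows: if $|a|_p^3<|b|_p^2$ then $3\gamma=\mathrm{ord}_p(b)$, in particular $3\mid\mathrm{ord}_p(b)$; if $|a|_p^3>|b|_p^2$ then $\gamma=\mathrm{ord}_p(b)-\mathrm{ord}_p(a)$; and if $|a|_p^3=|b|_p^2$ then necessarily $2\mid\mathrm{ord}_p(a)$, $3\mid\mathrm{ord}_p(b)$ and $2\gamma=\mathrm{ord}_p(a)$. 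In each case I would clear the forced valuation by an explicit substitution and reduce \eqref{1.1} to a Hensel-type lifting problem whose reduction modulo $p$ is transparent.

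In the case $|a|_p^3>|b|_p^2$, putting $x=(b/a)z$ and dividing \eqref{1.1} by $b$ gives $cz^3+z-1=0$ with $c=b^2/a^3\in p\mathbb{Z}_p$; at $z=1$ the left-hand side has absolute value $|c|_p<1$ while its derivative $3cz^2+1$ is a unit, so Hensel's lemma produces a root $z\in\mathbb{Z}_p^{*}$ and \eqref{1.1} is always solvable. In the case $|a|_p^3<|b|_p^2$, solvability forces $3\mid\mathrm{ord}_p(b)$, so one may assume $\sqrt[3]{b}-\exists$ and fix $\xi\in\mathbb{Q}_p$ with $\xi^3=b$; substituting $x=\xi(1+w)$ and dividing by $b$ yields $w^3+3w^2+(3+\mu)w+\mu=0$ with $\mu=a\xi/b\in p\mathbb{Z}_p$, and since $3+\mu$ is a unit ($p>3$), Hensel's lemma gives a root $w\in p\mathbb{Z}_p$, hence a $\mathbb{Q}_p$-root of \eqref{1.1}. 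Conversely, a $\mathbb{Q}_p$-root written as $x_0=p^{\mathrm{ord}_p(b)/3}y$ with $y\in\mathbb{Z}_p^{*}$ reduces \eqref{1.1} modulo $p$ to $y^3\equiv b_0$, forcing $b_0$ to be a cubic residue; together with $3\mid\mathrm{ord}_p(b)$ this is precisely $\sqrt[3]{b}-\exists$ by Proposition~\ref{Criterionforp}.

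The substantial case is $|a|_p^3=|b|_p^2$. Writing $\mathrm{ord}_p(a)=2s$, $\mathrm{ord}_p(b)=3s$, the substitution $x=p^s y$ followed by division by $p^{3s}$ turns \eqref{1.1} into $y^3+\tilde a y-\tilde b=0$ with $\tilde a\equiv a_0$, $\tilde b\equiv b_0\pmod p$, and any $\mathbb{Q}_p$-root has $y\in\mathbb{Z}_p^{*}$; hence \eqref{1.1} is solvable in $\mathbb{Q}_p$ if and only if this cubic has a root in $\mathbb{Z}_p^{*}$. By Hensel's lemma --- noting that when $D_0\not\equiv0\pmod p$ the reduction $g(t)=t^3+a_0t-b_0\in\mathbb{F}_p[t]$ is separable --- this is equivalent to $g$ having a root in $\mathbb{F}_p$. (If $D_0\equiv0\pmod p$, then $g$ has no triple root, since $a_0,b_0\neq0$ and $p>3$, so it has a simple $\mathbb{F}_p$-root that lifts and \eqref{1.1} is solvable; this is consistent with $D_0u_{p-2}^2\equiv0\not\equiv9a_0^2\pmod p$.) It remains to show, for $D_0\not\equiv0$, that $g$ has a root in $\mathbb{F}_p$ if and only if $D_0u_{p-2}^2\not\equiv9a_0^2\pmod p$.

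For this last point, the recurrence defining $u_n$ has characteristic polynomial $g$, and matching its initial values via partial fractions yields the closed form $u_n=\sum_{i=1}^{3}r_i^{n+2}/g'(r_i)$, where $r_1,r_2,r_3\in\overline{\mathbb{F}}_p$ are the roots of $g$; in particular $u_{p-2}=\sum_i r_i^{p}/g'(r_i)$. I would then split into the three factorization types of $g$ over $\mathbb{F}_p$ via the Frobenius action on $\{r_1,r_2,r_3\}$: if $g$ splits in $\mathbb{F}_p$, then $r_i^{p}=r_i$ and $u_{p-2}=\sum_i r_i/g'(r_i)=0$, so $D_0u_{p-2}^2=0\neq9a_0^2$; if $g$ is a linear factor times an irreducible quadratic with $\mathbb{F}_p$-root $r_1$, a short computation gives $u_{p-2}=3r_1/g'(r_1)$, hence $D_0u_{p-2}^2=9r_1^2(r_2-r_3)^2$, and equality with $9a_0^2$ would force $(3r_1^2+a_0)(r_1^2+a_0)=0$, contradicting either $g(r_1)=0$ with $b_0\neq0$ (if $r_1^2=-a_0$) or $g'(r_1)\neq0$ (if $r_1^2=-a_0/3$); and if $g$ is irreducible, Frobenius cyclically permutes the roots, giving $u_{p-2}=\frac{1}{r_3-r_1}+\frac{1}{r_1-r_2}+\frac{1}{r_2-r_3}=3a_0/\delta$ with $\delta^2=D_0$, so $D_0u_{p-2}^2=9a_0^2$ exactly. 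Hence $D_0u_{p-2}^2\not\equiv9a_0^2\pmod p$ holds precisely when $g$ is reducible over $\mathbb{F}_p$, i.e.\ exactly when $g$ has a root in $\mathbb{F}_p$, completing the proof. The main obstacle is this case $|a|_p^3=|b|_p^2$: establishing the closed form for $u_n$ and then carrying out the Frobenius bookkeeping in the linear-times-quadratic subcase, where one genuinely needs $a_0b_0\neq0$, $p>3$ and $D_0\neq0$ to exclude the spurious equality; the other two cases are routine Hensel-lemma arguments once the right normalizing substitution is found.
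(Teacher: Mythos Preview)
Your argument is correct. The paper itself does not prove this theorem --- it is quoted verbatim from \cite{FMBOMS} --- but the machinery assembled in Section~\ref{Strategies} (Propositions~\ref{solvableinQ_pandZ_p}, \ref{conditionforZ^{*}_p}, \ref{CriteriainZp*} and \ref{CubicinF_p}) amounts to exactly the same reduction you carry out: scale by a suitable power of $p$ to land in $\mathbb{Z}_p^{*}$, invoke Hensel's lemma to descend to $\mathbb{F}_p$, and then decide whether $t^3+a_0t-b_0$ has an $\mathbb{F}_p$-root. Your case-specific substitutions $x=(b/a)z$ and $x=\xi(1+w)$ are minor variants of the uniform rescaling $x=p^{-k}y$ used in Corollary~\ref{Aisets}; they lead to the same Hensel set-up.

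Where your proposal genuinely goes beyond the paper is in the treatment of the equal-valuation case. The paper (and \cite{FMBOMS}) simply import Sun's criterion (Proposition~\ref{CubicinF_p}) as a black box, whereas you prove it from scratch: the closed form $u_n=\sum_i r_i^{\,n+2}/g'(r_i)$ matches the initial data, so $u_{p-2}=\sum_i r_i^{\,p}/g'(r_i)$, and the Frobenius bookkeeping in each of the three factorization types yields $D_0u_{p-2}^2\equiv 0$, $9r_1^2(r_2-r_3)^2\neq 9a_0^2$, and $9a_0^2$ respectively. The exclusion of the spurious equality in the linear-times-quadratic case via $(3r_1^2+a_0)(r_1^2+a_0)=0$ is clean and uses exactly the hypotheses $a_0b_0\neq0$, $D_0\not\equiv0$. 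This self-contained derivation is a nice bonus; it makes the criterion transparent rather than a citation.
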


Let us define the following sets
\begin{eqnarray*}
\Delta_1&=&\left\{(a,b)\in \mathbb{Q}_p\times\mathbb{Q}_p: \ |a|_p^3<|b|_p^2, \  \sqrt[3]{b}-\exists\right\}\\
\Delta_2&=&\left\{(a,b)\in \mathbb{Q}_p\times\mathbb{Q}_p: \ |a|_p^3=|b|_p^2, \ D_0u_{p-2}^2\not\equiv 9a_0^{2} \ (mod \ p) \right\}\\
\Delta_3&=&\left\{(a,b)\in \mathbb{Q}_p\times\mathbb{Q}_p: \ |a|_p^3>|b|_p^2\right\}\\
\Delta&=&\Delta_1\cup\Delta_2\cup\Delta_3
\end{eqnarray*}

The set $\Delta\subset \mathbb{Q}_p\times\mathbb{Q}_p$ is called \textit{a solvability domain} of the depressed cubic equation \eqref{1.1}. Since $\mathbb{Q}_p$ is a disordered field, we could not describe the solvability domain $\Delta$ in the picture. However, we can describe the $p-$adic absolute value of elements of the set $\Delta$ in the picture. It was presented in Fig. \ref{figSolvability domain}. We refer it as the solvability domain of the cubic equation \eqref{1.1}.

\begin{figure}[H]
	\centering
		\includegraphics[width=0.6\textwidth]{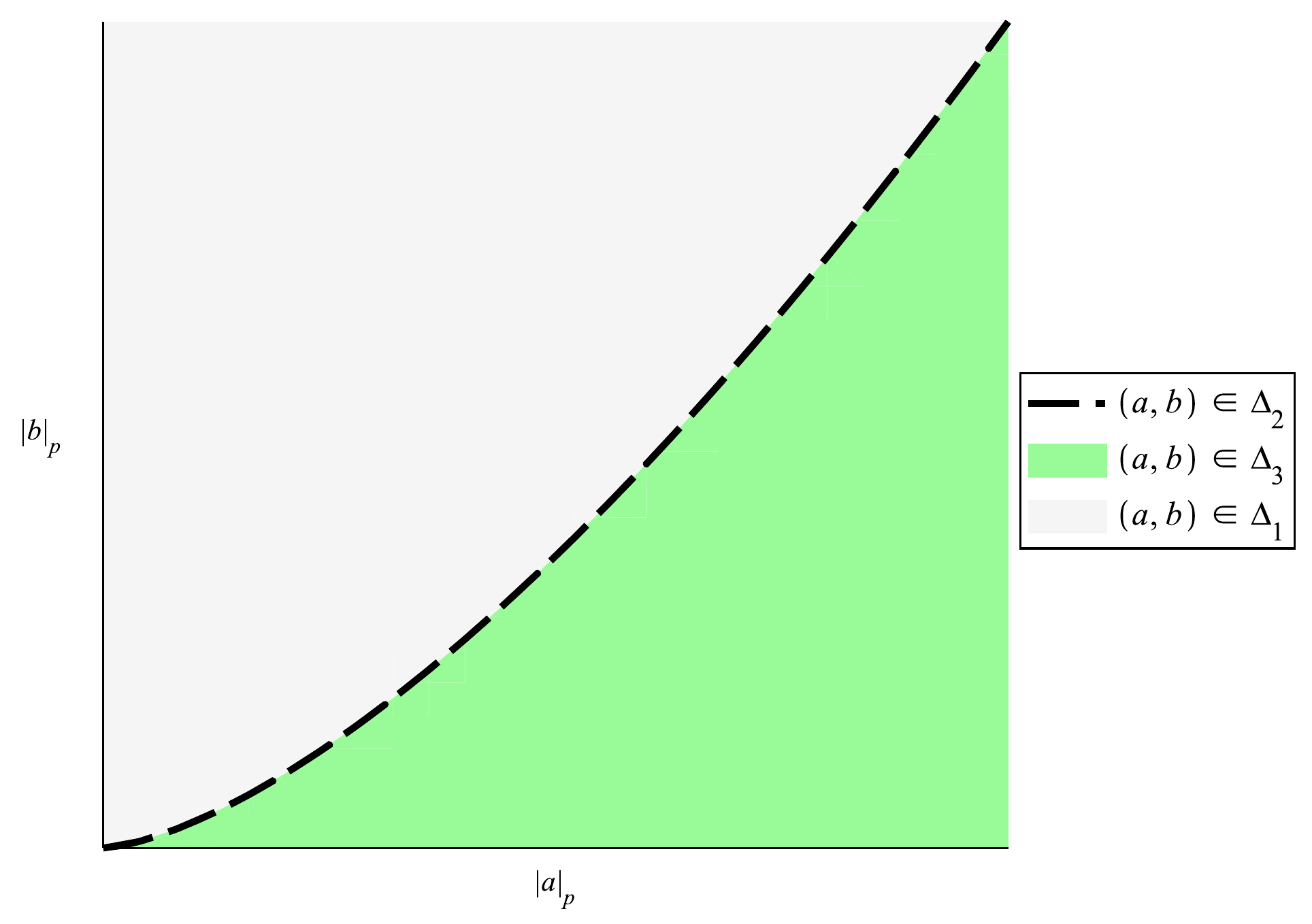}
		\rule{35em}{0.5pt}
	\caption[The solvability domain]{The solvability domain of the cubic equation \eqref{1.1}}
	\label{figSolvability domain}
\end{figure}

The following results are the main results of the paper which give a full description of $p-$adic absolute values of roots of the cubic equation \eqref{1.1} in the solvability domain $\Delta$.

\begin{theorem}[Descriptions of Roots in $\Delta_1\cup\Delta_3$]\label{MainResult2}
Let $p>3$ and $(a,b)\in\Delta_1\cup\Delta_3$. The cubic equation \eqref{1.1} is 
\begin{itemize}
\item[$1$.] Solvable in ${\mathbb{Z}^*_p}^{[3]}$ if and only if $$(a,b)\in\Delta_1, \ |b|_p = 1,\ p \equiv 1 \ (mod \ 3);$$
\item[$2$.] Solvable in ${\left(\mathbb{Z}_p \setminus \mathbb{Z}^{*}_p\right)}^{[3]}$ if and only if 
$$(a,b)\in\Delta_1, \ |b|_p<1, \ p \equiv 1 \ (mod \ 3)\ \ \text{or} \ \ (a,b)\in\Delta_3, \ |a|_p<1, \ \sqrt{-a}-\exists;$$
\item[$3$.] Solvable in ${\left(\mathbb{Q}_p\setminus\mathbb{Z}_p\right)}^{[3]}$ if and only if 
$$(a,b)\in\Delta_1, \ |b|_p>1, \ p \equiv 1 \ (mod \ 3) \ \ \text{or} \ \ (a,b)\in\Delta_3, \ |b|_p>|a|_p, \ \sqrt{-a}-\exists;$$
\item[$4$.] Solvable in $(\mathbb{Z}_p\setminus\mathbb{Z}^*_p)^{[1]} \sqcup (\mathbb{Z}_p^*)^{[2]}$ if and only if 
$$(a,b)\in\Delta_3,  \ |a|_p =1, \ \sqrt{-a}-\exists;$$
\item[$5$.] Solvable in ${\mathbb{Z}_p^*}^{[1]} \sqcup (\mathbb{Q}_p\setminus\mathbb{Z}_p)^{[2]}$ if and only if 
$$(a,b)\in\Delta_3, \ |a|_p = |b|_p, \ \sqrt{-a}-\exists;$$
\item[$6$.] Solvable in $(\mathbb{Z}_p\setminus\mathbb{Z}^*_p)^{[1]} \sqcup (\mathbb{Q}_p\setminus\mathbb{Z}_p)^{[2]}$ if and only if 
$$(a,b)\in\Delta_3, \ |a|_p > |b|_p, \ |a|_p > 1, \ \sqrt{-a}-\exists;$$
\item[$7$.] Solvable in ${\mathbb{Z}^*_p}^{[1]}$ if and only if 
$$(a,b)\in\Delta_1, \ |b|_p = 1, \ p \equiv 2 \ (mod \ 3), \  \text{or} \  (a,b)\in\Delta_3, \ |a|_p=|b|_p, \ \sqrt{-a}-\not\exists;$$
\item[$8$.] Solvable in $(\mathbb{Z}_p \setminus \mathbb{Z}^{*}_p)^{[1]}$ if and only if  
$$(a,b)\in\Delta_1, |b|_p<1, \ p \equiv 2 \ (mod \ 3) \ \ \text{or} \ \  (a,b)\in\Delta_3, \ |a|_p>|b|_p, \ \sqrt{-a}-\not\exists;$$
\item[$9$.] Solvable in $(\mathbb{Q}_p\setminus\mathbb{Z}_p)^{[1]}$ if and only if  $$(a,b)\in\Delta_1, \ |b|_p>1, \ p \equiv 2 \ (mod \ 3) \ \ \text{or} \ \ (a,b)\in\Delta_3, \ |a|_p<|b|_p, \ \sqrt{-a}-\not\exists.$$
\end{itemize}
\end{theorem}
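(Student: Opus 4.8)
The plan is to reduce everything to the two regimes $\Delta_1$ (where $|a|_p^3 < |b|_p^2$) and $\Delta_3$ (where $|a|_p^3 > |b|_p^2$) separately, and in each regime to pin down the $p$-adic absolute values of the three roots by a Newton-polygon / Hensel-type analysis. For $(a,b)\in\Delta_1$, the dominant term balances $x^3$ against $b$, so all three roots should have $|x|_p = |b|_p^{1/3}$; since $(a,b)\in\Delta_1$ already guarantees $\sqrt[3]{b}-\exists$, the cube root $b^{1/3}\in\mathbb{Q}_p$ exists, and then the factor $p\equiv 1\,(\mathrm{mod}\ 3)$ versus $p\equiv 2\,(\mathrm{mod}\ 3)$ controls whether all three cube roots of $b$ (hence, after a perturbation argument, all three roots of \eqref{1.1}) lie in $\mathbb{Q}_p$ or only one does — this is exactly Proposition~\ref{Criterionforp}(ii) with $q=3$, since $(3,p-1)=3$ or $1$ accordingly. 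The location $\mathbb{Z}_p^*$ vs.\ $\mathbb{Z}_p\setminus\mathbb{Z}_p^*$ vs.\ $\mathbb{Q}_p\setminus\mathbb{Z}_p$ is then read off from whether $|b|_p=1$, $|b|_p<1$, or $|b|_p>1$, since $|x|_p=|b|_p^{1/3}$. First I would make the rescaling $x = b^{1/3} y$ (when the cube root exists) to turn \eqref{1.1} into $y^3 + a b^{-2/3} y = 1$ with $|ab^{-2/3}|_p = |a|_p|b|_p^{-2/3} < 1$, and then apply Hensel's lemma to each of the cube roots of unity present in $\mathbb{Q}_p$.

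For $(a,b)\in\Delta_3$, the dominant balance is between $x^3$ and $ax$, i.e.\ $x(x^2+a)\approx b$. This suggests factoring: one root is "small," of size $|b|_p/|a|_p$, coming from $x\approx b/a$, while the other two are "large," of size $|a|_p^{1/2}$, coming from $x^2 \approx -a$. Hence the hypothesis $\sqrt{-a}-\exists$ is precisely what is needed for the two large roots to be rational over $\mathbb{Q}_p$ (otherwise they form a conjugate pair in a quadratic extension and only the small root survives, which is cases 7–9). Concretely I would substitute $x = (b/a)z$ to isolate the small root via Hensel (it has $|x|_p = |b|_p/|a|_p$), and substitute $x = \sqrt{-a}\,w$ (when $\sqrt{-a}$ exists) to handle the large pair; the approximate quadratic $w^2 = 1 - \frac{b}{a\sqrt{-a}}w^{-1}\cdot(\ldots)$ has the same square-class as $1$, so Hensel again produces two roots in $\mathbb{Q}_p$. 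Then one tabulates: the small root has $|x|_p = |b|_p/|a|_p$, which is $<1$, $=1$, or $>1$ according as $|a|_p > |b|_p$, $|a|_p=|b|_p$, or $|a|_p<|b|_p$; and the large roots have $|x|_p = |a|_p^{1/2}$, which is $<1$, $=1$, or $>1$ according as $|a|_p<1$, $|a|_p=1$, or $|a|_p>1$. Matching each of the $3\times 3$ combinations against the constraint $|a|_p^3>|b|_p^2$ (equivalently $|a|_p^{1/2} > |b|_p/|a|_p$, i.e.\ the large roots always dominate the small one in absolute value) yields exactly cases 1–9 in the $\Delta_3$ column; in particular the combination "$|a|_p = 1$" forces the small root into $\mathbb{Z}_p\setminus\mathbb{Z}_p^*$ (case 4), and so on.

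The routine but careful part is checking that the perturbation does not move any root across a valuation boundary and that the square/cube-class conditions are exactly the stated $\sqrt{-a}-\exists$ and $p\equiv \pm1\,(\mathrm{mod}\ 3)$ conditions — this is where Hensel's lemma in the strong form (or Proposition~\ref{Criterionforp}(iii)) does the work, and one must verify the Hensel hypothesis $|f'(x_0)|_p$ is large enough at each approximate root. The genuine obstacle, and the place deserving the most care, is the \emph{mixed} cases 4–6, where one root sits in one domain and the two conjugate-type roots in another: here I would need to argue that no degeneration occurs (no repeated roots, since $ab\neq0$ keeps the discriminant away from the bad locus in these regimes — note $\Delta_2$, the equal-valuation case, is explicitly excluded from this theorem), and that counting multiplicities correctly gives the split $[1]\sqcup[2]$ rather than $[3]$ or $[1]$. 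Once the valuations of all three roots and their fields of definition are established, assembling the nine "if and only if" statements is just bookkeeping against the definitions of solvability in $\mathbb{A}^{[n]}$ and $\mathbb{A}^{[n]}\sqcup\mathbb{B}^{[m]}$, and against Theorem~\ref{MainResult1} to know that at least one root exists throughout $\Delta_1\cup\Delta_3$.
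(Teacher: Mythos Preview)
Your proposal is correct and follows essentially the same route as the paper. In both cases the argument is: rescale so that the cubic reduces modulo $p$ to either $y^3\equiv 1$ (in $\Delta_1$) or $w(w^2-1)\equiv 0$ (in $\Delta_3$), apply Hensel's lemma to lift the simple roots, and then read off the valuation of each root to decide which of $\mathbb{Z}_p^*$, $\mathbb{Z}_p\setminus\mathbb{Z}_p^*$, $\mathbb{Q}_p\setminus\mathbb{Z}_p$ it lands in; the dichotomy $p\equiv 1,2\ (\mathrm{mod}\ 3)$ in $\Delta_1$ and the condition $\sqrt{-a}-\exists$ in $\Delta_3$ enter exactly as you describe. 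The only organizational difference is that the paper first tabulates $\mathbf{N}_{\mathbb{A}}(x^3+ax-b)$ for each domain $\mathbb{A}$ separately (Theorem~\ref{Numbers}), using the pure power-of-$p$ rescaling $y=p^kx$ of Corollary~\ref{Aisets} rather than your rescalings by $b^{1/3}$ or $\sqrt{-a}$, and then reads Theorem~\ref{MainResult2} off that table, whereas you would compute all three root valuations at once via the Newton-polygon picture and assemble the nine cases directly.
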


\begin{theorem}[Descriptions of Roots in $\Delta_2$]\label{MainResult3}
Let $p>3$ and $(a,b)\in\Delta_2$. The cubic equation \eqref{1.1} is 
\begin{itemize}
\item[$1$.] Solvable in ${\mathbb{Z}^*_p}^{[3]}$ if and only if $|b|_p = 1$ and
$$ |D|_p=1, \ D_0u_{p-2}^2 \equiv 0 \ (mod \ p) \quad \text{or} \quad 0 \leq |D|_p < 1, \ \sqrt{D}-\exists;$$
\item[$2$.] Solvable in $(\mathbb{Z}_p \setminus \mathbb{Z}^{*}_p)^{[3]}$ if and only if $|b|_p<1$ and 
$$|D|_p=1, \ D_0u_{p-2}^2 \equiv 0 \ (mod \ p) \quad \text{or} \quad 0 \leq |D|_p < 1, \ \sqrt{D}-\exists;$$
\item[$3$.] Solvable in $(\mathbb{Q}_p\setminus\mathbb{Z}_p)^{[3]}$ if and only if $|b|_p>1$ and 
$$|D|_p=1,\ D_0u_{p-2}^2 \equiv 0 \ (mod \ p) \quad \text{or} \quad 0 \leq |D|_p < 1, \ \sqrt{D}-\exists;$$
\item[$4$.] Solvable in ${\mathbb{Z}^*_p}^{[1]}$ if and only if $|b|_p = 1$ and 
$$|D|_p=1,\ D_0u_{p-2}^2 \not\equiv 0,9a_0^2 \ (mod \ p) \quad \text{or} \quad 0 < |D|_p < 1, \ \sqrt{D}-\not\exists;$$
\item[$5$.] Solvable in $(\mathbb{Z}_p \setminus \mathbb{Z}^{*}_p)^{[1]}$ if and only if $|b|_p<1$ 
$$|D|_p=1, \ D_0u_{p-2}^2 \not\equiv 0,9a_0^2 \ (mod \ p) \quad \text{or} \quad 0 < |D|_p < 1, \ \sqrt{D}-\not\exists;$$
\item[$6$.] Solvable in $(\mathbb{Q}_p\setminus\mathbb{Z}_p)^{[1]}$ if and only if $|b|_p>1$ and 
$$|D|_p=1, \ D_0u_{p-2}^2 \not\equiv 0,9a_0^2 \ (mod \ p) \quad \text{or} \quad 0 < |D|_p < 1, \ \sqrt{D}-\not\exists.$$
\end{itemize}
\end{theorem}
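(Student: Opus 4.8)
The plan is to reduce everything to the ``unit case'' by a scaling substitution, count the roots of the resulting companion cubic over $\mathbb{Q}_p$, and translate back. For $(a,b)\in\Delta_2$ the equality $|a|_p^3=|b|_p^2$ forces $\mathrm{ord}_p(a)=2m$ and $\mathrm{ord}_p(b)=3m$ for a unique $m\in\mathbb{Z}$, with $|b|_p=1,\ <1,\ >1$ corresponding to $m=0,\ m>0,\ m<0$. The substitution $x=p^m y$ turns \eqref{1.1} into the companion cubic $y^3+a^{*}y-b^{*}=0$, whose reduction modulo $p$ is $\bar f(t)=t^3+a_0t-b_0$ and whose discriminant is $D$. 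Since $a^{*},b^{*}\in\mathbb{Z}_p^{*}$, a one-line valuation estimate ($|y|_p>1\Rightarrow|y^3+a^{*}y|_p=|y|_p^3>1=|b^{*}|_p$, and $|y|_p<1\Rightarrow|y^3+a^{*}y|_p=|y|_p<1=|b^{*}|_p$) shows that every root of the companion cubic lying in $\mathbb{Q}_p$ is a unit; hence every root of \eqref{1.1} has $p$-order exactly $m$. This is why $\Delta_2$ can never produce a mixed type $\mathbb{A}^{[n]}\sqcup\mathbb{B}^{[m]}$ and why the ambient domain is forced to be $\mathbb{Z}_p^{*}$ ($m=0$), $\mathbb{Z}_p\setminus\mathbb{Z}_p^{*}$ ($m>0$) or $\mathbb{Q}_p\setminus\mathbb{Z}_p$ ($m<0$). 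It now suffices to decide whether the companion cubic has three roots or one root in $\mathbb{Q}_p$ (it has at least one, by Theorem \ref{MainResult1}), the three cases $|b|_p=1,\ <1,\ >1$ serving only to relabel the target domain.

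Next I split according to the size of $D$. If $|D|_p=1$, then $D_0\not\equiv0$, so $\bar f$ is separable; moreover $\bar f$ cannot be irreducible over $\mathbb{F}_p$ (an irreducible reduction would make \eqref{1.1} irreducible over $\mathbb{Q}_p$ by Gauss's lemma, contradicting solvability), so $\bar f$ either splits over $\mathbb{F}_p$ or has a unique root there. Hensel's lemma lifts $\mathbb{F}_p$-roots of $\bar f$ to $\mathbb{Z}_p^{*}$-roots of $f$, hence $f$ splits over $\mathbb{Q}_p$ iff $\bar f$ splits over $\mathbb{F}_p$, and $f$ has a unique root iff $\bar f$ does. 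To detect this with $u_{p-2}$, one derives from the recurrence the generating identity $\sum_{k\ge0}u_kX^k=(1+a_0X^2-b_0X^3)^{-1}$ and, by partial fractions over the roots $\alpha,\beta,\gamma$ of $\bar f$, the formula $u_{p-2}=\sum_{\lambda}\lambda^{p}\big/\prod_{\mu\neq\lambda}(\lambda-\mu)$. If $\bar f$ splits, then $\lambda^p=\lambda$ and the three-point Lagrange sum $\sum_\lambda\lambda/\prod_{\mu\ne\lambda}(\lambda-\mu)$ vanishes, so $u_{p-2}\equiv0$; if $\bar f$ has the single root $\alpha\in\mathbb{F}_p$, a short symmetric-function computation gives $u_{p-2}=3\alpha/\bar f'(\alpha)\not\equiv0$ (nonzero since $\alpha\ne0$, as $b_0\ne0$); and if $\bar f$ were irreducible one would obtain $D_0u_{p-2}^2\equiv9a_0^2$, reconfirming its exclusion from $\Delta_2$. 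Thus on $\Delta_2\cap\{|D|_p=1\}$ there are three roots iff $D_0u_{p-2}^2\equiv0$ and one root iff $D_0u_{p-2}^2\not\equiv0$ (and then automatically $\not\equiv9a_0^2$), which is the first alternative in items $1$--$6$.

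If $0\le|D|_p<1$, then $D_0\equiv0$, so $\bar f$ has a multiple root; since $p>3$ and $b_0\neq0$ it has no triple root, hence $\bar f(t)=(t-r)^2(t+2r)$ with $r\not\equiv0$ (equating coefficients gives $a_0\equiv-3r^2$, $b_0\equiv-2r^3$). Hensel lifts the simple factor $t+2r$, giving $f(t)=(t-\xi)q(t)$ with $\xi\in\mathbb{Z}_p^{*}$, $q\in\mathbb{Z}_p[t]$ monic quadratic, and $q(\xi)\in\mathbb{Z}_p^{*}$ (since $\bar\xi\ne r$). Hence $D=\mathrm{disc}(f)=q(\xi)^2\,\mathrm{disc}(q)$ differs from $\mathrm{disc}(q)$ by a unit square, so the quadratic $q$ splits over $\mathbb{Q}_p$---equivalently $f$ has three roots in $\mathbb{Q}_p$---iff $\mathrm{disc}(q)$, equivalently $D$, is a square in $\mathbb{Q}_p$, i.e. iff $\sqrt{D}-\exists$; otherwise $q$ is irreducible over $\mathbb{Q}_p$ and $f$ has exactly one root. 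The degenerate value $D=0$ (then $q$ is the square of a $\mathbb{Q}_p$-linear factor, so all three roots lie in $\mathbb{Q}_p$) falls on the ``three roots counted with multiplicity'' side, which is exactly why items $1$--$3$ admit $|D|_p=0$ whereas items $4$--$6$ insist on $|D|_p>0$. Combining this with the case $|D|_p=1$ and the domain bookkeeping of the first paragraph yields all six items.

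The main obstacle is the middle of the $|D|_p=1$ analysis: establishing the closed form $u_{p-2}=\sum_{\lambda}\lambda^{p}/\prod_{\mu\ne\lambda}(\lambda-\mu)$ and extracting from it the trichotomy $u_{p-2}\equiv0$ / $D_0u_{p-2}^2\equiv9a_0^2$ / otherwise, matched respectively to $\bar f$ splitting / being irreducible / having a unique root over $\mathbb{F}_p$. Everything else---the scaling reduction, Hensel's lemma, and the elementary discriminant manipulations for cubics---is routine; and if the recurrence $u_n$ has already been analysed in the cited works, this step may simply be quoted, leaving essentially the scaling reduction together with the non-separable discriminant computation.
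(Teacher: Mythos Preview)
Your argument is correct and shares the paper's overall skeleton: scale by $x=p^{m}y$ to the companion cubic $y^{3}+a^{*}y-b^{*}=0$ with unit coefficients (this is precisely the paper's Corollary~\ref{Aisets}), observe that all its $\mathbb{Q}_p$-roots are units (so the ambient domain is dictated by the sign of $m$, i.e.\ by $|b|_p$), and then count those roots. Where you diverge is in how the unit-case count is obtained. The paper does not prove it here at all: it derives Theorem~\ref{MainResult3} from Theorem~\ref{Numbers}, and for the line $|a|_p=|b|_p=1$ of Theorem~\ref{Numbers} it simply cites \cite{FMBOMS}, which in turn invokes Sun's trichotomy (Proposition~\ref{CubicinF_p}) for the $\mathbb{F}_p$-count. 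Your treatment is self-contained: for $|D|_p=1$ you recover the needed part of Sun's result via the generating-function identity $\sum u_kX^k=\prod_\lambda(1-\lambda X)^{-1}$ and the closed form $u_{p-2}=\sum_\lambda\lambda^{p}/\prod_{\mu\neq\lambda}(\lambda-\mu)$ (your computations here check out, including $u_{p-2}=3\alpha/\bar f'(\alpha)$ in the one-root case); for $|D|_p<1$ your Hensel factorisation $f=(t-\xi)q$ together with the discriminant identity $D=q(\xi)^{2}\,\mathrm{disc}(q)$ reduces everything to whether a quadratic splits, bypassing the external reference entirely. The trade-off is that the paper's route is shorter on the page but opaque without \cite{FMBOMS,ZHS1,ZHS2,ZHS3}, whereas yours is longer but stands alone; note also that within $\Delta_2$ you only need the dichotomy $D_0u_{p-2}^{2}\equiv 0$ versus $\not\equiv 0$, so your sketch of the irreducible case ($\equiv 9a_0^{2}$) is indeed optional.
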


\begin{remark}
If $(a,b)\in \Delta$ then the cubic equation \eqref{1.1} is not solvable in $(\mathbb{Z}_p\setminus\mathbb{Z}^{*}_p) \ \sqcup \ \mathbb{Z}_p^{*} \  \sqcup \ (\mathbb{Q}_p\setminus\mathbb{Z}_p).$ In other words, for any  $(a,b)\in \Delta$, there is no cubic equation \eqref{1.1} in which its three solutions (if any) belong to different domains $\mathbb{Z}_p\setminus\mathbb{Z}^{*}_p, \ \mathbb{Z}_p^{*}, \ \mathbb{Q}_p\setminus\mathbb{Z}_p.$ Moreover, if $(a,b) \in \Delta_2$ then the cubic equation \eqref{1.1} is not solvable in $\mathbb{A}^{[n]} \sqcup \mathbb{B}^{[m]}$ where $\mathbb{A},\mathbb{B} \in \{\mathbb{Z}_p\setminus\mathbb{Z}_p^{*},\ \mathbb{Z}_p^{*},\ \mathbb{Q}_p\setminus\mathbb{Z}_p\}, \ m+n=3, \ m,n>0$.
\end{remark}

The graphical illustration of Theorem \ref{MainResult2} is given in Figs. \ref{figDescription1}-\ref{figDescription4}. The graphical illustration of Theorem \ref{MainResult3} is given in Figs. \ref{figDescription5}-\ref{figDescription6}. The main strategy of proving Theorem \ref{MainResult2}-\ref{MainResult3} is presented in the next section. Namely, Theorem \ref{MainResult2}-\ref{MainResult3} follow from Theorem \ref{Numbers} which describes the number $\mathbf{N}_{\mathbb{Z}^*_p}(x^3+ax-b)$, $\mathbf{N}_{\mathbb{Z}_p\setminus\mathbb{Z}^*_p}(x^3+ax-b)$ and $\mathbf{N}_{\mathbb{Q}_p\setminus\mathbb{Z}_p}(x^3+ax-b)$ of roots of the cubic equation \eqref{1.1} in $\mathbb{Z}^*_p,$ $\mathbb{Z}_p\setminus\mathbb{Z}^{*}_p,$ and $\mathbb{Q}_p\setminus\mathbb{Z}_p$. All these results are presented in  Section \ref{NumberRoots}.

\begin{figure}[H]
	\centering
		\includegraphics[width=0.5\textwidth]{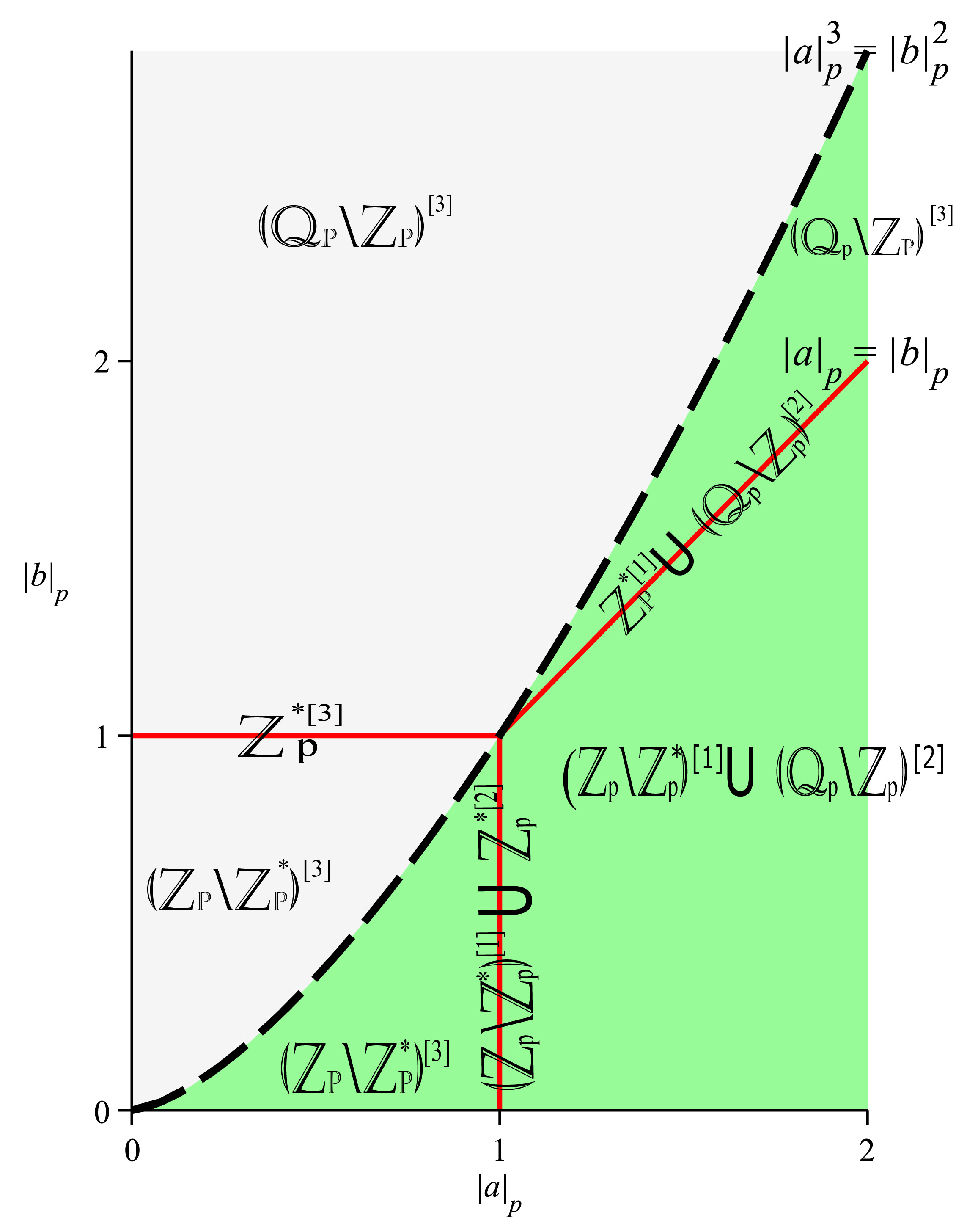}
		\rule{35em}{0.4pt}
	\caption[The description of solutions]{$(a,b) \in \Delta_1$ with $p \equiv  1 \ (mod \ 3)$ and $(a,b) \in \Delta_3$ with $\sqrt{-a}-\exists$}
	\label{figDescription1}
\end{figure}

\begin{figure}[H]
	\centering
		\includegraphics[width=0.5\textwidth]{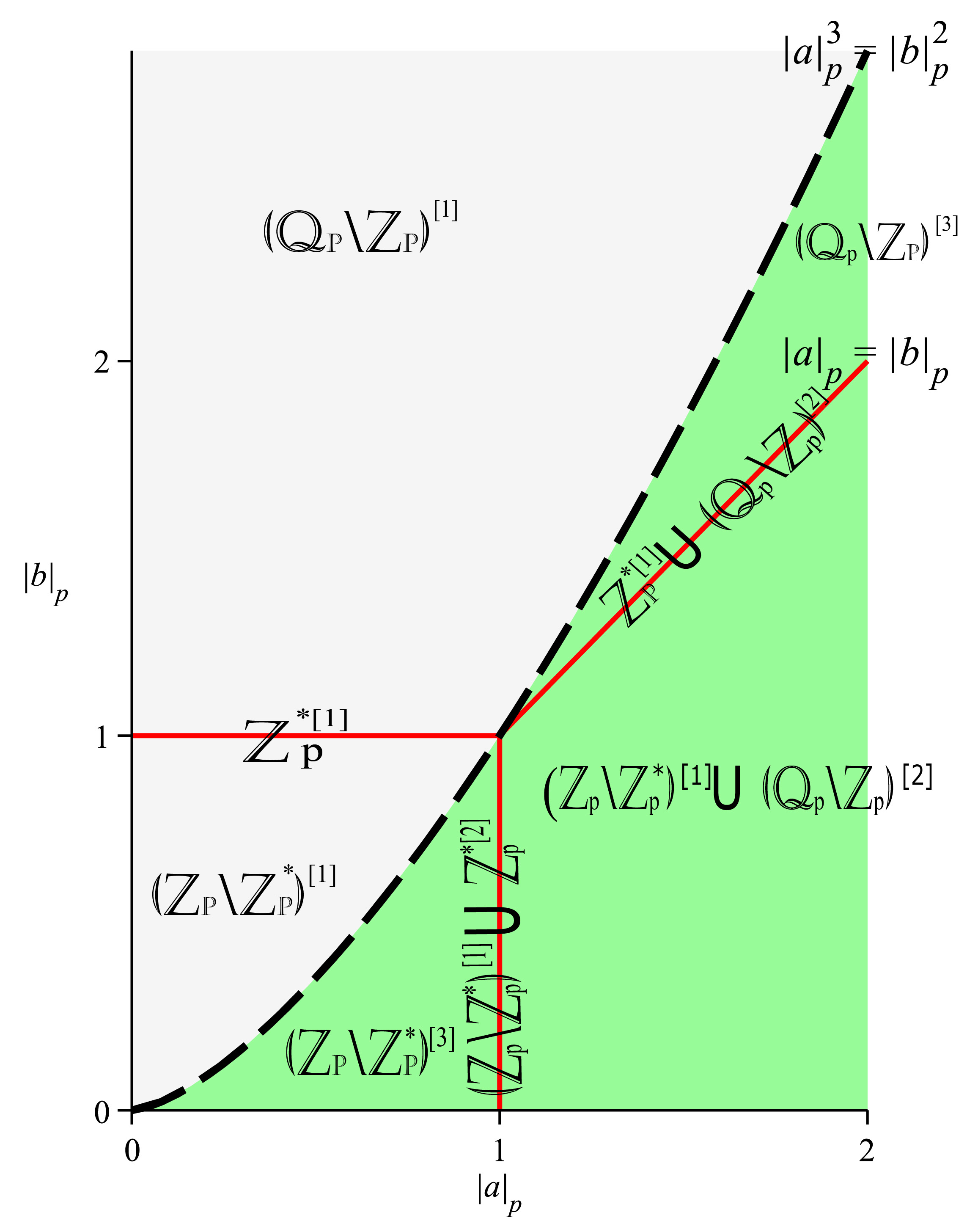}
		\rule{35em}{0.4pt}
	\caption[The description of solutions]{$(a,b) \in \Delta_1$ with $p \equiv 2  \ (mod \ 3)$ and $(a,b) \in \Delta_3$ with $\sqrt{-a}-\exists$}
	\label{figDescription2}
\end{figure}
\begin{figure}[H]
	\centering
		\includegraphics[width=0.5\textwidth]{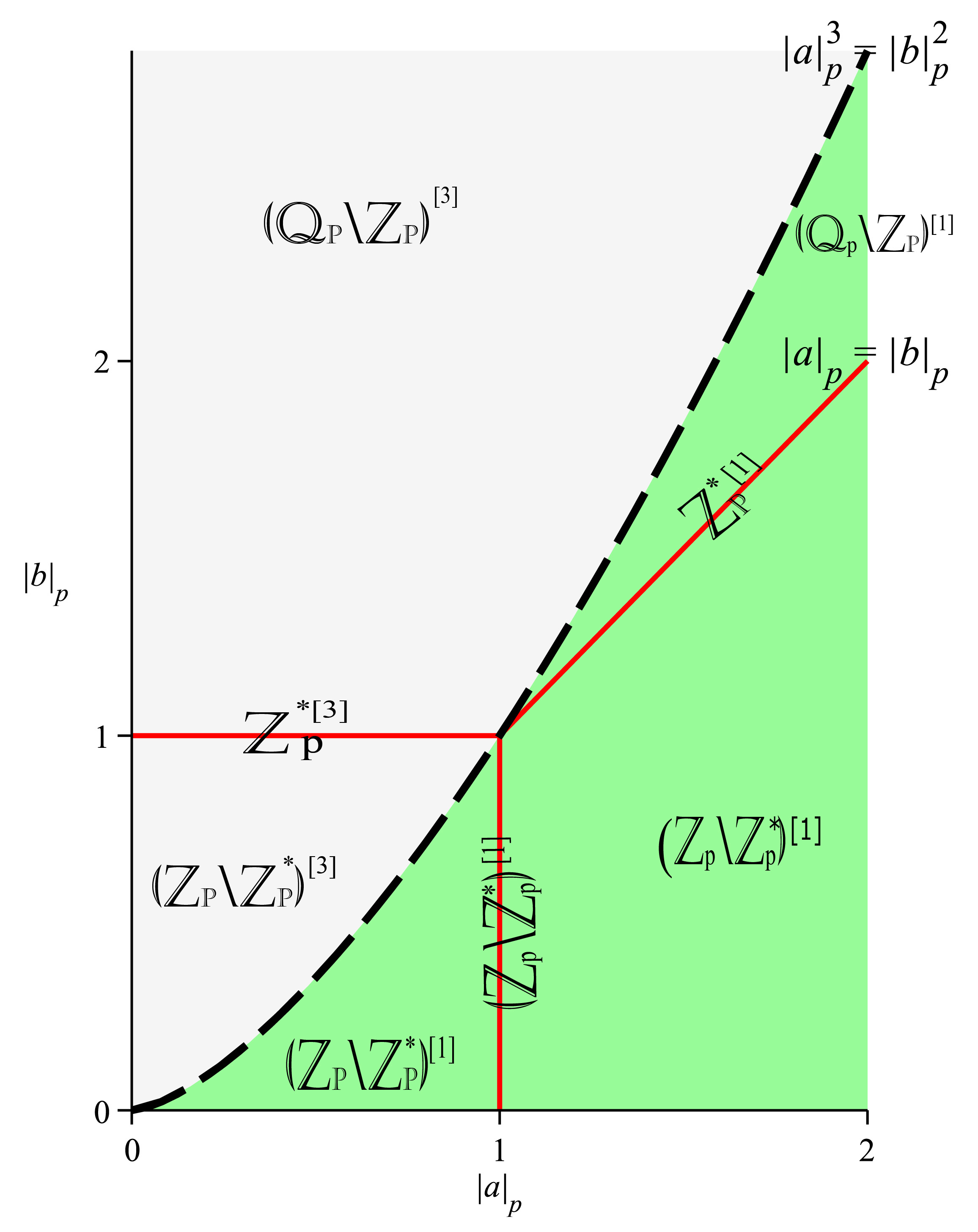}
		\rule{35em}{0.5pt}
	\caption[The description of solutions]{$(a,b) \in \Delta_1$ with $p \equiv  1 \ (mod \ 3)$ and $(a,b) \in \Delta_3$ with $\sqrt{-a}-\not\exists$}
	\label{figDescription3}
\end{figure}
\begin{figure}[H]
	\centering
		\includegraphics[width=0.5\textwidth]{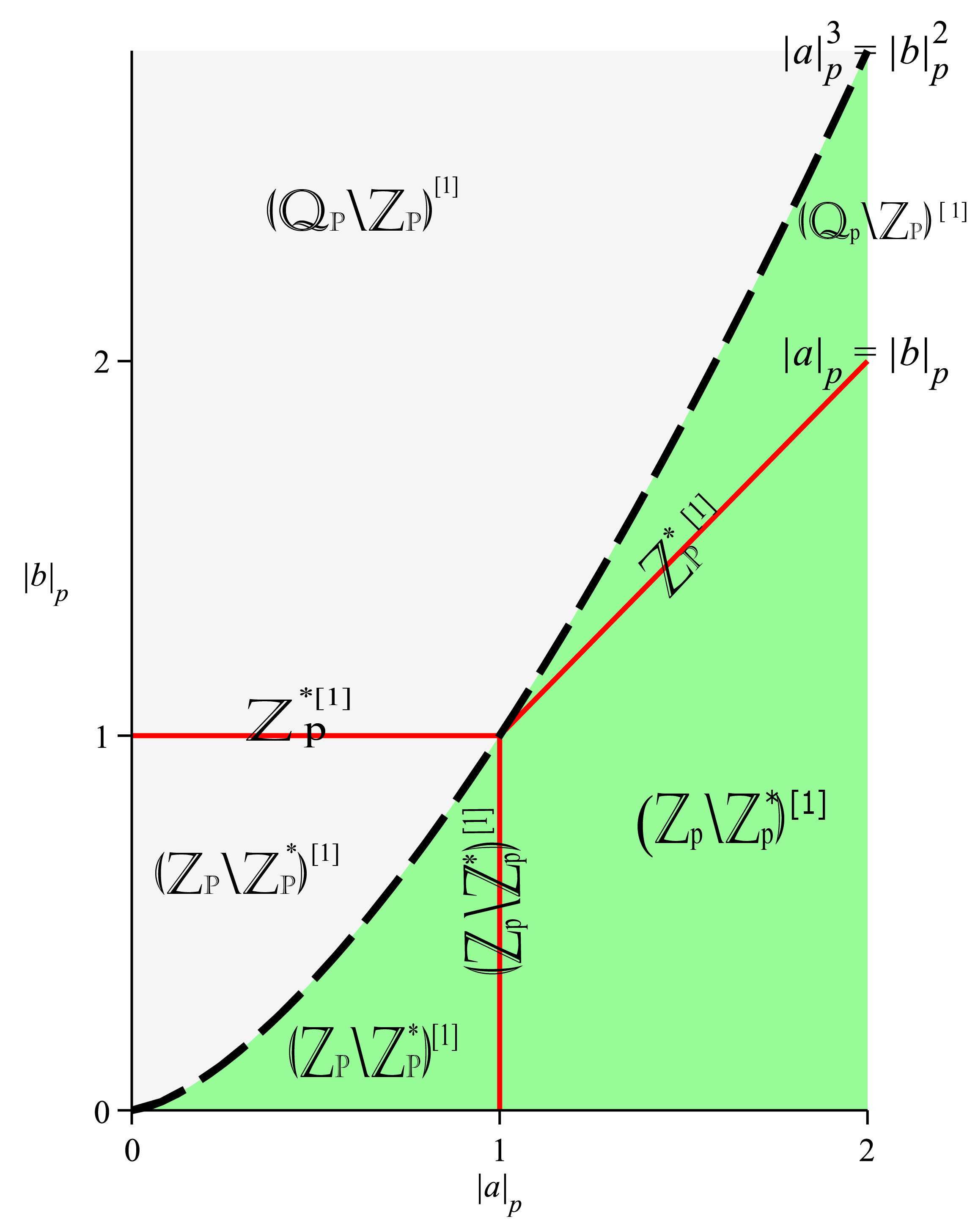}
		\rule{35em}{0.5pt}
	\caption[The description of solutions]{$(a,b) \in \Delta_1$ with $p \equiv 2 \ (mod \ 3)$ and $(a,b) \in \Delta_3$ with $\sqrt{-a}-\not\exists$}
	\label{figDescription4}
\end{figure}
\begin{figure}[H]
	\centering
		\includegraphics[width=0.5\textwidth]{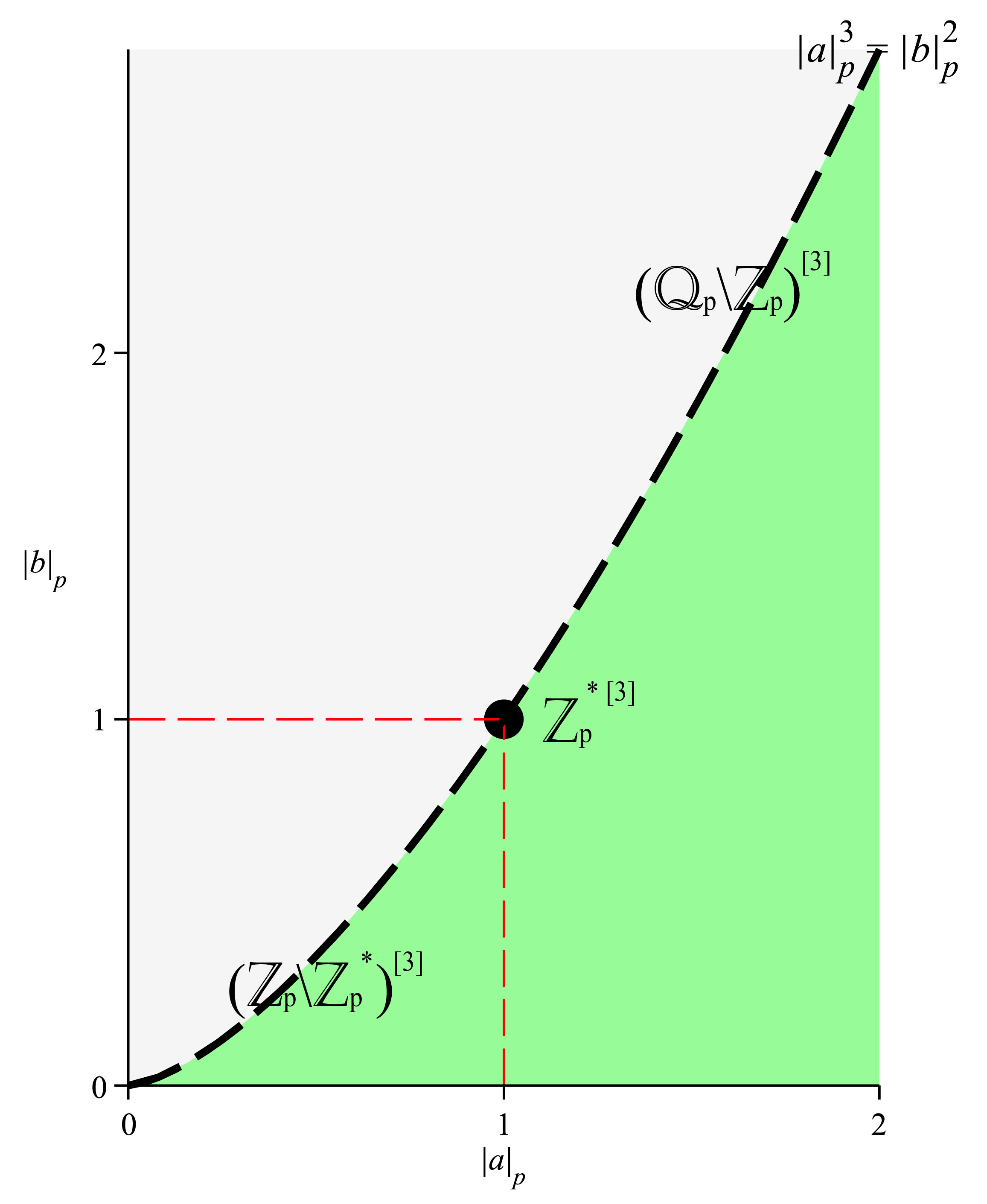}
		\rule{35em}{0.5pt}
	\caption[The description of solutions]{$(a,b) \in \Delta_2$ and $|D|_p=1$, $D_0u_{p-2}^2 \equiv 0$ or  $0 \leq |D|_p < 1$, $\sqrt{D}-\exists$}
	\label{figDescription5}
\end{figure}
\begin{figure}[H]
	\centering
		\includegraphics[width=0.5\textwidth]{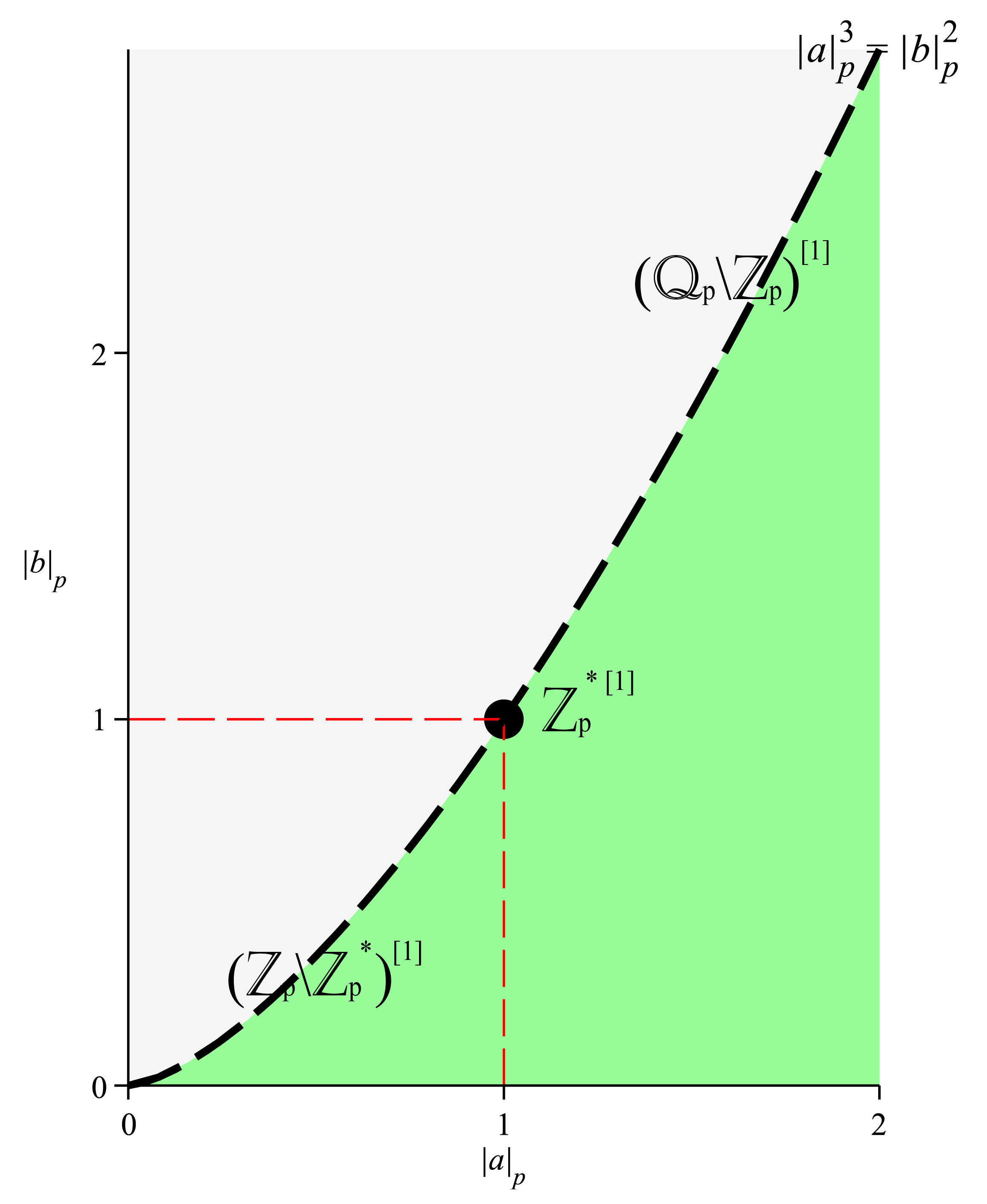}
		\rule{35em}{0.5pt}
	\caption[The description of solutions]{$(a,b) \in \Delta_2$ and $|D|_p=1$ $D_0u_{p-2}^2 \not\equiv 0,9a_0^2$ or  $0 < |D|_p < 1$ $\sqrt{D}-\not\exists$}
	\label{figDescription6}
\end{figure}

\section{The Main Strategies}\label{Strategies}

We know that, by definition, two $p-$adic numbers are closed when their difference is divisible by a high power of $p$. This property enables $p-$adic numbers to encode congruence information in a way that turns out to be powerful tools in the theory of polynomial equation. In fact, Hensel's lifting lemma allows us to lift a simple solution of a polynomial equation over the finite field $\mathbb{F}_p$ up to the unique solution of the same polynomial equation over the ring $\mathbb{Z}_p$ of $p-$adic integer numbers. However, that solution cannot be lifted up to the field $\mathbb{Q}_p$ of $p-$adic numbers. At this point, we are aiming to study the relation between solutions of the polynomial equations over $\mathbb{Q}_p$ and $\mathbb{Z}_p$. We shall show that, indeed, any solution of any cubic equation over $\mathbb{Q}_p$ (or some special domains) can be uniquely determined by a solution of another cubic equation over $\mathbb{Z}^{*}_p$. Consequently, it is enough to study cubic equations over $\mathbb{Z}^{*}_p$.    

Let us consider the cubic equation
\begin{equation}
\label{1}
x^3+ax=b.
\end{equation}
where $a,b\in\mathbb{Q}_p$ and $ab \neq 0$.

Let $\mathbb{K} \subset \mathbb{Z}$ be any subset. We introduce the following set 
$$
\frac{\mathbb{Z}_p^{*}}{p^{\mathbb{K}}}:= \left\{x \in \mathbb{Q}_p : \log_p|x|_p \in \mathbb{K} \right\}. 
$$ 

It is easy to check that 
$$
\frac{\mathbb{Z}_p^{*}}{p^{\mathbb{K}}}=\bigcup\limits_{i\in\mathbb{K}}\mathbb{S}_{p^{i}}(0),
$$
where $\mathbb{S}_{p^{i}}(0)=\{x\in\mathbb{Q}_p: |x|_p=p^{i}\}$ is the sphere with the radius $p^{i}$. 

\begin{proposition}\label{solvableinQ_pandZ_p}
Let $p$ be any prime, $a,b\in\mathbb{Q}_p$ with $ab\neq 0$, and $\mathbb{K} \subset \mathbb{Z}$ be any subset. The cubic equation \eqref{1} is solvable in the set $\ \cfrac{\mathbb{Z}_p^{*}}{p^{\mathbb{K}}} \ $ if and only if there exists a pair $(y^{*},k) \in \mathbb{Z}_p^{*} \times \mathbb{K}$ such that $y^{*}$ is a root of the following cubic equation
\begin{eqnarray}\label{2}
y^3+A_ky=B_k
\end{eqnarray}
where $A_k=ap^{2k}$ and $B_k=bp^{3k}$. Moreover, in this case, a root of the cubic equation \eqref{1} has the form $x = \cfrac{y^{*}}{p^{k}}$.
\end{proposition}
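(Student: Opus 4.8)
The plan is to establish the claimed equivalence by a direct substitution argument, exploiting the fact that the set $\frac{\mathbb{Z}_p^{*}}{p^{\mathbb{K}}}$ decomposes as a disjoint union of spheres $\mathbb{S}_{p^i}(0)$ over $i\in\mathbb{K}$. First I would observe that any $x\in\frac{\mathbb{Z}_p^{*}}{p^{\mathbb{K}}}$ satisfies $\log_p|x|_p=k$ for some $k\in\mathbb{K}$, and hence can be written uniquely as $x=\frac{y^{*}}{p^{k}}$ with $y^{*}=x|x|_p^{-1}\in\mathbb{Z}_p^{*}$ — this is exactly the canonical representation recalled in the Preliminaries ($x=x^{*}/|x|_p$). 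So the correspondence $x\leftrightarrow(y^{*},k)$ between $\frac{\mathbb{Z}_p^{*}}{p^{\mathbb{K}}}$ and $\mathbb{Z}_p^{*}\times\mathbb{K}$ is a bijection; the only thing to check is that it matches up roots of \eqref{1} with roots of \eqref{2}.

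The key computational step is then purely algebraic: substitute $x=\frac{y}{p^{k}}$ into $x^3+ax=b$, obtaining $\frac{y^3}{p^{3k}}+\frac{ay}{p^{k}}=b$, and multiply through by $p^{3k}$ to get $y^3+ap^{2k}y=bp^{3k}$, i.e. $y^3+A_ky=B_k$ with $A_k=ap^{2k}$, $B_k=bp^{3k}$ as in the statement. This shows that for a fixed $k$, the map $y\mapsto y/p^{k}$ is a bijection between the solution set of \eqref{2} in $\mathbb{Q}_p$ and the solution set of \eqref{1} in the sphere-scaled copy $p^{-k}\mathbb{Q}_p=\mathbb{Q}_p$; restricting to $y\in\mathbb{Z}_p^{*}$ corresponds exactly to restricting $x$ to $\mathbb{S}_{p^k}(0)$, since $|y/p^k|_p=p^k$ iff $|y|_p=1$. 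Assembling over all $k\in\mathbb{K}$: equation \eqref{1} has a root in $\frac{\mathbb{Z}_p^{*}}{p^{\mathbb{K}}}=\bigcup_{k\in\mathbb{K}}\mathbb{S}_{p^k}(0)$ if and only if for some $k\in\mathbb{K}$ it has a root in $\mathbb{S}_{p^k}(0)$, if and only if for some $k\in\mathbb{K}$ equation \eqref{2} has a root $y^{*}\in\mathbb{Z}_p^{*}$, which is the desired statement. The final sentence about the form $x=y^{*}/p^{k}$ is immediate from the substitution.

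Honestly, there is no serious obstacle here: the proposition is essentially a change-of-variables bookkeeping lemma, and the main point to be careful about is the bijectivity/units-matching claim — namely that $|y|_p=1$ is \emph{equivalent} to $x=y/p^k$ lying on the sphere of radius $p^k$, so that no roots are lost or double-counted when passing between the two equations. One should also note explicitly that $A_kB_k=ab\,p^{5k}\neq0$, so \eqref{2} is still a genuine depressed cubic of the same shape (this keeps it within the framework where later results apply). I would write the proof as: (i) recall the disjoint-sphere decomposition; (ii) fix $k$ and carry out the substitution, verifying the two-way implication between ``\eqref{1} has a root on $\mathbb{S}_{p^k}(0)$'' and ``\eqref{2} has a root in $\mathbb{Z}_p^{*}$''; (iii) take the union over $k\in\mathbb{K}$ and conclude, reading off the root formula along the way.
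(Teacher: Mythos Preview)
Your approach is correct and is exactly the paper's own argument, only spelled out in greater detail: the paper's proof is the one-line observation that for $|x|_p=p^{k}$, $x$ solves \eqref{1} iff $y^{*}=x|x|_p$ solves \eqref{2}, which is precisely your substitution-and-rescale step together with the sphere decomposition. One small slip to fix: you wrote $y^{*}=x|x|_p^{-1}$, but since $|x|_p=p^{k}$ and $x=y^{*}/p^{k}$, the correct formula is $y^{*}=x\cdot p^{k}=x|x|_p$ (matching the canonical representation $x=x^{*}/|x|_p$ from the Preliminaries); the rest of your computation already uses the right substitution, so this is purely a typo.
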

\begin{proof}
Let $x\in \mathbb{Q}_p$ and $|x|_p=p^{k}$. Then $x\in\cfrac{\mathbb{Z}_p^{*}}{p^{\mathbb{K}}}$ is a root of the cubic equation \eqref{1} if and only if $y^{*}=x|x|_p\in \mathbb{Z}_p^{*}$ is a root of the cubic equation \eqref{2}. This completes the proof.
\end{proof}

Here, we list frequently used domains in this paper.

\begin{itemize}
\item[1.] If $\mathbb{K}_1 = \{0\}$ then $\cfrac{\mathbb{Z}_p^{*}}{p^{\mathbb{K}_1}}=\mathbb{Z}^{*}_p$;
\item[2.] If $\mathbb{K}_2 = \mathbb{N}_{-}$ then $\cfrac{\mathbb{Z}_p^{*}}{p^{\mathbb{K}_2}}=\mathbb{Z}_p \setminus \mathbb{Z}^{*}_p$;
\item[3.] If $\mathbb{K}_3 = \mathbb{N}$ then $\cfrac{\mathbb{Z}_p^{*}}{p^{\mathbb{K}_3}}=\mathbb{Q}_p \setminus \mathbb{Z}_p$;
\item[4.] If $\mathbb{K}_4 = \mathbb{Z}$ then $\cfrac{\mathbb{Z}_p^{*}}{p^{\mathbb{K}_4}}=\mathbb{Q}_p$.
\item[5.] If $\mathbb{K}_5 = \mathbb{Z}_{-}$ then $\cfrac{\mathbb{Z}_p^{*}}{p^{\mathbb{K}_5}}=\mathbb{Z}_p$;
\item[6.] If $\mathbb{K}_6 = \mathbb{Z}_{+}$ then $\cfrac{\mathbb{Z}_p^{*}}{p^{\mathbb{K}_6}}=\mathbb{Q}_p \setminus \left(\mathbb{Z}_p \setminus \mathbb{Z}^{*}_p\right)$;
\item[7.] If $\mathbb{K}_7 =\mathbb{Z}\setminus\{0\}$ then $\cfrac{\mathbb{Z}_p^{*}}{p^{\mathbb{K}_7}}=\mathbb{Q}_p\setminus\mathbb{Z}^{*}_p$. 
\end{itemize}

\begin{corollary}\label{Aisets}
Let $p$ be any prime, $a,b\in\mathbb{Q}_p$ with $ab\neq 0$, and $\mathbb{K}_i \subset \mathbb{Z}$ be a subset given as above, $i=\overline{1,7}$. The cubic equation \eqref{1} is solvable in the set $\ \cfrac{\mathbb{Z}_p^{*}}{p^{\mathbb{K}_i}}\ $ if and only if there exists $(y^{*},k) \in \mathbb{Z}_p^{*} \times \mathbb{K}_i$ such that $y^{*}$ is a root of the following cubic equation 
\begin{eqnarray*}
y^3+A_ky=B_k
\end{eqnarray*}
where $A_k=ap^{2k}$ and $B_k=bp^{3k}$. Moreover, in this case, a root of the cubic equation \eqref{1} has the form $x = \cfrac{y^{*}}{p^{k}}$.
\end{corollary}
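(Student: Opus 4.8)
The final statement, Corollary \ref{Aisets}, is an immediate specialization of Proposition \ref{solvableinQ_pandZ_p}, so the plan is simply to invoke that proposition seven times, once for each of the distinguished index sets $\mathbb{K}_i$ listed just above the corollary. Concretely, I would begin by recalling that Proposition \ref{solvableinQ_pandZ_p} asserts, for an arbitrary subset $\mathbb{K}\subset\mathbb{Z}$, the equivalence between solvability of \eqref{1} in $\frac{\mathbb{Z}_p^{*}}{p^{\mathbb{K}}}$ and the existence of a pair $(y^{*},k)\in\mathbb{Z}_p^{*}\times\mathbb{K}$ with $y^{*}$ a root of \eqref{2}, together with the formula $x=y^{*}/p^{k}$ for the corresponding root. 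The corollary is obtained by applying this with $\mathbb{K}=\mathbb{K}_i$ for each $i=\overline{1,7}$.

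The only thing that genuinely needs to be checked is that each of the seven displayed identifications $\frac{\mathbb{Z}_p^{*}}{p^{\mathbb{K}_i}}=\mathbb{A}_i$ (with $\mathbb{A}_i$ running through $\mathbb{Z}_p^{*}$, $\mathbb{Z}_p\setminus\mathbb{Z}_p^{*}$, $\mathbb{Q}_p\setminus\mathbb{Z}_p$, $\mathbb{Q}_p$, $\mathbb{Z}_p$, $\mathbb{Q}_p\setminus(\mathbb{Z}_p\setminus\mathbb{Z}_p^{*})$, $\mathbb{Q}_p\setminus\mathbb{Z}_p^{*}$) is correct. This is a routine unwinding of the definition $\frac{\mathbb{Z}_p^{*}}{p^{\mathbb{K}}}=\{x\in\mathbb{Q}_p:\log_p|x|_p\in\mathbb{K}\}=\bigcup_{i\in\mathbb{K}}\mathbb{S}_{p^{i}}(0)$, together with the elementary facts $\mathbb{Z}_p=\bigsqcup_{i\le 0}\mathbb{S}_{p^{i}}(0)$ (plus the point $0$, which is excluded here since $ab\ne0$ forces every root to be nonzero) and $\mathbb{Z}_p^{*}=\mathbb{S}_{1}(0)=\mathbb{S}_{p^{0}}(0)$. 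For instance, $\log_p|x|_p\in\mathbb{N}_{-}$ means $|x|_p\le p^{-1}<1$, i.e. $x\in\mathbb{Z}_p\setminus\mathbb{Z}_p^{*}$; $\log_p|x|_p\in\mathbb{N}$ means $|x|_p\ge p>1$, i.e. $x\in\mathbb{Q}_p\setminus\mathbb{Z}_p$; and so on for the remaining five. These identities are already asserted in the bulleted list preceding the corollary, so in the proof I would simply reference them.

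Putting this together, the proof is a single sentence: fix $i\in\{1,\dots,7\}$, apply Proposition \ref{solvableinQ_pandZ_p} with $\mathbb{K}=\mathbb{K}_i$, and use the identification $\frac{\mathbb{Z}_p^{*}}{p^{\mathbb{K}_i}}=\mathbb{A}_i$ to rewrite the conclusion; the root formula $x=y^{*}/p^{k}$ transfers verbatim. There is no real obstacle here — the corollary is a bookkeeping statement whose entire content is the table of seven index sets — so the "hard part," such as it is, is merely making sure the seven set identities are stated without error (in particular getting the boundary cases right: $0\in\mathbb{K}_1,\mathbb{K}_4,\mathbb{K}_5,\mathbb{K}_6$ but $0\notin\mathbb{K}_2,\mathbb{K}_3,\mathbb{K}_7$, which is what distinguishes, e.g., $\mathbb{Z}_p$ from $\mathbb{Z}_p\setminus\mathbb{Z}_p^{*}$ and $\mathbb{Q}_p\setminus\mathbb{Z}_p^{*}$ from $\mathbb{Q}_p\setminus\mathbb{Z}_p$). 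I would therefore present the proof as a direct corollary with a one-line justification, noting that the substantive work is all contained in Proposition \ref{solvableinQ_pandZ_p}.
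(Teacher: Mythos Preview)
Your proposal is correct and matches the paper's approach: the corollary is stated without proof in the paper precisely because it is an immediate specialization of Proposition \ref{solvableinQ_pandZ_p} to the seven listed index sets $\mathbb{K}_i$, with the set identifications already recorded in the bulleted list above it. There is nothing further to add.
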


Consequently, it is enough to study solvability of the cubic
equation \eqref{1} over $\mathbb{Z}_p^{*}$, where $a,b \in \mathbb{Q}_p$ with $ab \neq 0$.

\begin{proposition}[\cite{FMBOMS}]\label{conditionforZ^{*}_p}
Let $p$ be any prime and $a,b\in\mathbb{Q}_p$ with $ab\neq 0$. If the cubic equation \eqref{1} is solvable in $\mathbb{Z}_p^{*}$ then either one of the following conditions holds true:
\begin{itemize}
  \item [$(i)$] $|a|_p=|b|_p\geq 1;$
  \item [$(ii)$] $|b|_p<|a|_p=1;$
  \item [$(iii)$] $|a|_p<|b|_p=1$.
\end{itemize}
\end{proposition}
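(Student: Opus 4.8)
The plan is to take any root $x\in\mathbb{Z}_p^{*}$, so that $|x|_p=1$, and simply read off constraints on $|a|_p$ and $|b|_p$ from the identity $b=x^3+ax$ using only the ultrametric property of $|\cdot|_p$: if $|u|_p\neq|v|_p$ then $|u+v|_p=\max\{|u|_p,|v|_p\}$, while if $|u|_p=|v|_p$ then $|u+v|_p\leq|u|_p$. Since $|x^3|_p=1$ and $|ax|_p=|a|_p$, the whole argument reduces to comparing $|a|_p$ with $1$, i.e. to a three-way case split.

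First I would treat $|a|_p>1$. Here $|ax|_p=|a|_p>1=|x^3|_p$, so $|b|_p=|x^3+ax|_p=|a|_p>1$, which gives $|a|_p=|b|_p\geq 1$, i.e. case $(i)$. Next, the case $|a|_p<1$: then $|ax|_p=|a|_p<1=|x^3|_p$, hence $|b|_p=\max\{1,|a|_p\}=1$, and $|a|_p<1=|b|_p$ is precisely case $(iii)$. Finally, the boundary case $|a|_p=1$: now $|x^3|_p=|ax|_p=1$, so the ultrametric inequality only yields $|b|_p\leq 1$ and cancellation in $x^3+ax$ is genuinely possible; if $|b|_p=1$ we land in case $(i)$ ($|a|_p=|b|_p=1\geq 1$), and if $|b|_p<1$ we land in case $(ii)$ ($|b|_p<|a|_p=1$). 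In every case one of the three listed alternatives holds, which is what is claimed.

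There is no real obstacle here; the argument is a short non-Archimedean estimate. The only point requiring any care is the boundary case $|a|_p=1$, where one must notice that cancellation in $x^3+ax$ forces a further split into the sub-cases $(i)$ and $(ii)$ according to whether $b$ is a unit or not — this is exactly why the three cases in the statement are not symmetric. It is also worth keeping in mind (though not part of the proof) that these conditions are necessary but not sufficient for solvability in $\mathbb{Z}_p^{*}$, so the proposition is correctly stated only as a one-sided implication, with the sharp criteria deferred to the later sections.
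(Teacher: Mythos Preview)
Your argument is correct and is exactly the natural ultrametric case split one would expect: with $|x|_p=1$ the three cases $|a|_p>1$, $|a|_p<1$, $|a|_p=1$ force respectively $(i)$, $(iii)$, and $(i)$-or-$(ii)$ via the strong triangle inequality. The paper does not give its own proof of this proposition --- it is simply quoted from \cite{FMBOMS} --- but your reasoning is the standard one and matches what the cited reference does.
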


This proposition gives necessary conditions for solvability of the cubic equation over $\mathbb{Z}_p^{*}$. To get the solvability criteria, we need Hensel's lifting lemma.

\begin{lemma}[Hensel's Lemma, \cite{Bor Shaf}]\label{Hensel}
Let $f(x)$ be polynomial whose the coefficients are $p-$adic
integers. Let $\theta$ be a $p-$adic integer such that for some
$i\geq 0$ we have
$$
f(\theta)\equiv 0 \ (mod \ p^{2i+1}),
$$
$$
f'(\theta)\equiv 0 \ (mod \ p^{i}), \quad f'(\theta)\not\equiv 0 \ (mod \ p^{i+1}).
$$
Then $f(x)$ has a unique $p-$adic integer root $x_0$ which satisfies $x_0\equiv \theta\ (mod \ p^{i+1}).$
\end{lemma}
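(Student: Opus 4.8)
The statement to prove is the classical Hensel lifting lemma in its strong (Newton-polygon) form. The plan is to exhibit the desired root as the limit of a Newton-type iteration and control the iteration using the hypotheses on $f(\theta)$ and $f'(\theta)$.

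First I would normalize the problem by writing $\theta_0 = \theta$ and constructing a sequence $\theta_0, \theta_1, \theta_2, \dots$ of $p$-adic integers via the refinement step: having obtained $\theta_n$ with $f(\theta_n)\equiv 0 \ (mod\ p^{2i+1+n})$ and $f'(\theta_n)\equiv 0 \ (mod\ p^i)$ but $f'(\theta_n)\not\equiv 0\ (mod\ p^{i+1})$, set $\theta_{n+1} = \theta_n + p^{i+n}\, t_n$ where $t_n\in\mathbb{Z}_p$ is chosen to kill the next digit. The key algebraic input is the Taylor expansion of $f$ with integer coefficients,
\begin{equation*}
f(\theta_n + p^{i+n} t_n) = f(\theta_n) + p^{i+n} t_n f'(\theta_n) + p^{2(i+n)} t_n^2\, g(\theta_n, t_n),
\end{equation*}
where $g$ has $p$-adic integer values because the higher Taylor coefficients $f^{(k)}(\theta_n)/k!$ are $p$-adic integers. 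Writing $f(\theta_n) = p^{2i+n}c_n$ and $f'(\theta_n) = p^i d_n$ with $c_n\in\mathbb{Z}_p$ and $d_n\in\mathbb{Z}_p^{*}$, the bracket becomes $p^{2i+n}\bigl(c_n + t_n d_n + p^{n} t_n^2 g\bigr)$, so choosing $t_n \equiv -c_n d_n^{-1}\ (mod\ p)$ forces $c_n + t_n d_n \equiv 0 \ (mod\ p)$ and hence $f(\theta_{n+1})\equiv 0\ (mod\ p^{2i+1+n})$. Simultaneously $f'(\theta_{n+1}) = f'(\theta_n) + p^{i+n}t_n f''(\theta_n) + \cdots \equiv f'(\theta_n)\ (mod\ p^{i+1})$ since $i+n \geq i+1$ for $n\geq 1$ — actually already for $n\geq 1$, so the valuation of $f'$ is preserved exactly at $i$ throughout the iteration. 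This is the step I expect to require the most care: one must track that the order of $f'(\theta_n)$ stays exactly $i$ (not just $\geq i$) at every stage, since that is what keeps the division by $d_n$ legitimate and the iteration well-defined; this uses $n \geq 0$ in the update but the preservation argument only needs $i + n \geq i+1$, i.e. it works from the first step onward provided the base case $f'(\theta_0)\not\equiv 0\ (mod\ p^{i+1})$ is given.

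Next I would observe that $|\theta_{n+1} - \theta_n|_p \leq p^{-(i+n)} \to 0$, so $(\theta_n)$ is Cauchy in the complete field $\mathbb{Q}_p$ and converges to some $x_0\in\mathbb{Z}_p$ (closedness of $\mathbb{Z}_p$). By continuity of the polynomial $f$, $f(x_0) = \lim f(\theta_n) = 0$ because $|f(\theta_n)|_p \leq p^{-(2i+1+n)}\to 0$. Moreover $|x_0 - \theta|_p \leq \max_n |\theta_{n+1}-\theta_n|_p \leq p^{-i}$, but in fact $x_0 \equiv \theta_1 \equiv \theta\ (mod\ p^{i+1})$ once we note that the first step already fixes the residue of $\theta$ modulo $p^i$ — more precisely, since $\theta_1 = \theta + p^i t_0$ we get $x_0 \equiv \theta + p^i t_0\ (mod\ p^{i+1})$; to land exactly at $x_0 \equiv \theta\ (mod\ p^{i+1})$ one uses that the hypothesis $f(\theta)\equiv 0\ (mod\ p^{2i+1})$ already forces $c_0 \equiv 0 \ (mod\ p)$, hence $t_0 \equiv 0\ (mod\ p)$, so $p^i t_0 \equiv 0\ (mod\ p^{i+1})$ and indeed $x_0\equiv\theta\ (mod\ p^{i+1})$.

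Finally, for uniqueness, suppose $x_0$ and $x_1$ are both $p$-adic integer roots with $x_0\equiv x_1\equiv\theta\ (mod\ p^{i+1})$. Write $x_1 = x_0 + p^{i+1}\delta$ with $\delta\in\mathbb{Z}_p$. Then $0 = f(x_1) = f(x_0) + p^{i+1}\delta f'(x_0) + p^{2(i+1)}\delta^2 h = p^{i+1}\delta f'(x_0) + p^{2i+2}\delta^2 h$ for some $h\in\mathbb{Z}_p$; since $\mathrm{ord}_p f'(x_0) = i$ (this holds because $x_0\equiv\theta\ (mod\ p^{i+1})$ implies $f'(x_0)\equiv f'(\theta)\ (mod\ p^{i+1})$), we get $0 = p^{2i+1}\bigl(\delta f'(x_0)/p^i + p\,\delta^2 h\bigr)$, and the bracketed term has the same $p$-adic order as $\delta$ unless $\delta = 0$; hence $\delta = 0$ and $x_1 = x_0$. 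Assembling the existence, the congruence $x_0\equiv\theta\ (mod\ p^{i+1})$, and uniqueness completes the proof.
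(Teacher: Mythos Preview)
The paper does not give its own proof of this lemma; it is simply quoted from Borevich--Shafarevich and used as a tool. So there is nothing in the paper to compare your argument against, and your Newton-iteration approach is exactly the standard route one finds in that reference.

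That said, your bookkeeping is off in a way that prevents the induction from closing. With the step $\theta_{n+1}=\theta_n+p^{\,i+n}t_n$ and the factorization $f(\theta_n)=p^{2i+n}c_n$, your computation only yields $f(\theta_{n+1})\equiv 0\ (mod\ p^{2i+1+n})$, which is the \emph{same} congruence you assumed for $\theta_n$, not the improved one $f(\theta_{n+1})\equiv 0\ (mod\ p^{2i+2+n})$ required to advance the induction. The symptom shows up again when you try to preserve $\mathrm{ord}_p f'(\theta_n)=i$: you note correctly that your argument only works for $n\geq 1$, and then patch the $n=0$ case by observing $t_0\equiv 0\ (mod\ p)$ --- but that patch simultaneously makes the first step vacuous.

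The fix is a one-symbol shift: take $\theta_{n+1}=\theta_n+p^{\,i+1+n}t_n$ and write $f(\theta_n)=p^{2i+1+n}c_n$ with $c_n\in\mathbb{Z}_p$. Then
\[
f(\theta_{n+1})=p^{2i+1+n}\bigl(c_n+t_nd_n\bigr)+p^{2(i+1+n)}(\cdots),
\]
and since $2(i+1+n)\geq 2i+2+n$ for all $n\geq 0$, choosing $t_n\equiv -c_nd_n^{-1}\ (mod\ p)$ gives $f(\theta_{n+1})\equiv 0\ (mod\ p^{2i+2+n})$ as needed. The derivative is preserved because $i+1+n\geq i+1$ for every $n\geq 0$, with no special pleading at $n=0$, and $\theta_1\equiv\theta\ (mod\ p^{i+1})$ is immediate. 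Your convergence and uniqueness paragraphs are fine once this index is corrected.
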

The following result was implicitly proven in \cite{FMBOMS}.
\begin{proposition}[\cite{FMBOMS}]\label{CriteriainZp*}
Let $p>3$ and $a,b\in\mathbb{Q}_p$ with $ab\neq 0.$ Let $a=\frac{a^{*}}{|a|_p}$ and $b=\frac{b^{*}}{|b|_p}$, where
$a^{*}=a_0+a_1p+a_2p^2+\cdots,$ $b^{*}=b_0+b_1p+b_2p^2+\cdots$. The following statements hold true. 
 \begin{itemize}
    \item [$(i)$] {If $|a|_p<|b|_p=1$ or $|b|_p<|a|_p=1$ or $|a|_p=|b|_p=1$ then the cubic equation \eqref{1} is solvable in $\mathbb{Z}_p^{*}$ if and only if the following congruent equation $x^3+ax\equiv b\ (mod \ p)$ is solvable in $\mathbb{F}_p.$ Moreover, in this case, one has that $\mathbf{N}_{\mathbb{Z}^*_p}(x^3+ax-b)=\mathbf{N}_{\mathbb{F}_p}(x^3+ax-b)$}
    \item [$(ii)$] {If $|a|_p=|b|_p>1$  then the cubic equation \eqref{1} is solvable in $\mathbb{Z}_p^{*}$ if and only if the following congruent equation $a^{*}x\equiv b^{*}\ (mod \ p)$ is solvable in  $\mathbb{F}_p$ Moreover, in this case, one has that $\mathbf{N}_{\mathbb{Z}^*_p}(x^3+ax-b)=\mathbf{N}_{\mathbb{F}_p}(a^{*}x-b^{*})$}
  \end{itemize} 
\end{proposition}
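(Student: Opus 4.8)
The strategy is to reduce everything to Hensel's lemma (Lemma \ref{Hensel}): in each case I will exhibit an explicit congruence modulo $p$ whose solutions in $\mathbb{F}_p$ are in (multiplicity-preserving) correspondence with the roots of \eqref{1} in $\mathbb{Z}_p^{*}$, the lifting being supplied by Hensel and the unit condition being read off from the residue. The split into $(i)$ and $(ii)$ is dictated by the non-Archimedean estimates of Proposition \ref{conditionforZ^{*}_p}: in $(i)$ both coefficients are $p$-adic integers and the equation reduces modulo $p$ as it stands, while in $(ii)$ one must first clear denominators.

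\textbf{Case $(i)$.} Since $|a|_p\le 1$ and $|b|_p\le 1$, we have $f(x):=x^{3}+ax-b\in\mathbb{Z}_p[x]$, and we may form $\bar f(x)=x^{3}+\bar a x-\bar b\in\mathbb{F}_p[x]$. If $x_0\in\mathbb{Z}_p^{*}$ is a root of $f$, then $\bar x_0$ is a nonzero root of $\bar f$. Conversely, let $\theta_0\in\mathbb{F}_p$ be a root of $\bar f$ and $\theta\in\mathbb{Z}_p$ a lift; then $f(\theta)\equiv 0\ (\mathrm{mod}\ p)$ trivially, and if $\overline{f'(\theta)}=3\theta_0^{2}+\bar a\neq 0$ — equivalently, if $\theta_0$ is a simple root of $\bar f$ — then Lemma \ref{Hensel} with $i=0$ produces a unique $p$-adic integer root $x_0\equiv\theta\ (\mathrm{mod}\ p)$, which is then a simple root of $f$ and which lies in $\mathbb{Z}_p^{*}$ exactly when $\theta_0\neq 0$. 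It remains to run through the three regimes. If $|a|_p<|b|_p=1$, then $\bar a=0$ and $\bar f(x)=x^{3}-\bar b$ with $\bar b\neq 0$: its roots are nonzero and, since $p>3$ makes $3$ a unit, simple, so that $\mathbf{N}_{\mathbb{Z}_p^{*}}(f)=\mathbf{N}_{\mathbb{F}_p}(\bar f)$, a number computed by Proposition \ref{aisresidueofp} (equivalently Proposition \ref{Criterionforp}$(iii)$). If $|a|_p=|b|_p=1$, then $\bar f(0)=-\bar b\neq 0$, so every root of $\bar f$ is nonzero, and a direct computation (using $p>3$) shows that a root of $\bar f$ can fail to be simple only when $\bar D_0:=\overline{-4a_0^{3}-27b_0^{2}}=0$ (the discriminant of $\bar f$); outside that locus the correspondence above is a multiplicity-preserving bijection, so $\mathbf{N}_{\mathbb{Z}_p^{*}}(f)=\mathbf{N}_{\mathbb{F}_p}(\bar f)$. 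Finally, if $|b|_p<|a|_p=1$, then $\bar b=0$ and $\bar f(x)=x(x^{2}+\bar a)$ with $\bar a\neq 0$; the root $\theta_0=0$ lifts (again by Hensel, as $\overline{f'(0)}=\bar a\neq 0$) to a root of $f$ in $p\mathbb{Z}_p$, hence not in $\mathbb{Z}_p^{*}$, whereas the roots of $x^{2}+\bar a$, at which $\overline{f'}=-2\bar a\neq 0$, lift to units — so here solvability in $\mathbb{Z}_p^{*}$ is governed by the congruence $x^{2}+a\equiv 0\ (\mathrm{mod}\ p)$, i.e. by the displayed congruence once the spurious solution $x\equiv 0$ is discarded.

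\textbf{Case $(ii)$.} Write $|a|_p=|b|_p=p^{m}$ with $m\ge 1$, so that $a=p^{-m}a^{*}$ and $b=p^{-m}b^{*}$ with $a^{*},b^{*}\in\mathbb{Z}_p^{*}$. Multiplying \eqref{1} by $p^{m}$ yields $g(x):=p^{m}x^{3}+a^{*}x-b^{*}=0$, an equation with the same solution set but with $g\in\mathbb{Z}_p[x]$. Since $m\ge 1$, the reduction $\bar g(x)=\bar a^{*}x-\bar b^{*}$ is linear with the single, necessarily nonzero, root $\bar b^{*}/\bar a^{*}$ — precisely the solution of $a^{*}x\equiv b^{*}\ (\mathrm{mod}\ p)$. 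For a lift $\theta$ of this root one has $g(\theta)\equiv\bar a^{*}\theta-\bar b^{*}\equiv 0\ (\mathrm{mod}\ p)$, while $g'(\theta)=3p^{m}\theta^{2}+a^{*}\equiv\bar a^{*}\neq 0\ (\mathrm{mod}\ p)$, so Lemma \ref{Hensel} with $i=0$ lifts it to a unique root $x_0\in\mathbb{Z}_p^{*}$ of $g$, which is simple ($|g'(x_0)|_p=1$); and $\bar g(0)=-\bar b^{*}\neq 0$ shows no root of $g$ lies in $p\mathbb{Z}_p$. Hence \eqref{1} is always solvable in $\mathbb{Z}_p^{*}$ here, with exactly one root there, i.e. $\mathbf{N}_{\mathbb{Z}_p^{*}}(f)=\mathbf{N}_{\mathbb{F}_p}(a^{*}x-b^{*})=1$.

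The one point that is not routine bookkeeping is the degenerate reduction in Case $(i)$ when $|a|_p=|b|_p=1$ and $p\mid D_0$: then $\bar f$ acquires a double root, the $i=0$ step of Hensel no longer applies to it, and one must instead invoke Lemma \ref{Hensel} with $i\ge 1$ while tracking the finer $p$-adic invariants — the discriminant-type quantity $D$ and the recurrence $u_n$. This is exactly the content of the $\Delta_2$ analysis of \cite{FMBOMS}, and it is also why the displayed equality of root counts in $(i)$ should be read with $\bar f$ separable; in the non-separable case the correspondence between roots in $\mathbb{Z}_p^{*}$ and roots of $\bar f$ is the subtler one encoded in Theorem \ref{MainResult3}.
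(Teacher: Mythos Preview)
The paper does not actually prove this proposition: it is quoted from \cite{FMBOMS} with the remark that it ``was implicitly proven'' there, and no argument is supplied here. Your Hensel-based reduction is precisely the natural approach, and is almost certainly what \cite{FMBOMS} does as well; in particular your treatment of case $(ii)$ --- clearing the denominator $p^{m}$, observing that the reduction becomes the linear congruence $a^{*}x\equiv b^{*}\ (\mathrm{mod}\ p)$, and lifting via Lemma~\ref{Hensel} with $i=0$ --- is clean and complete.

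Two points deserve comment. First, you correctly flag the degenerate sub-case $|a|_p=|b|_p=1$, $p\mid D_0$: there $\bar f$ has a repeated root, the $i=0$ Hensel step fails at that root, and the equality $\mathbf{N}_{\mathbb{Z}_p^{*}}(f)=\mathbf{N}_{\mathbb{F}_p}(\bar f)$ as literally stated can break (indeed Theorem~\ref{Numbers} shows $\mathbf{N}_{\mathbb{Z}_p^{*}}$ can be $1$ or $3$ there depending on whether $\sqrt{D}$ exists, while Proposition~\ref{CubicinF_p} gives $\mathbf{N}_{\mathbb{F}_p}=3$). Your deferral to the $\Delta_2$ analysis is therefore not a gap in your argument but an accurate reading of where the proposition's wording is loose. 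Second, in the sub-case $|b|_p<|a|_p=1$ you note that the root $\theta_0=0$ of $\bar f=x(x^2+\bar a)$ lifts to $p\mathbb{Z}_p$, not $\mathbb{Z}_p^{*}$; this again means the displayed equality of counts is off by one unless one reads $\mathbf{N}_{\mathbb{F}_p}$ as counting only nonzero roots (note that Proposition~\ref{CubicinF_p} is stated only for $\bar a\bar b\neq 0$). You have identified this correctly; it is a wrinkle in the statement rather than in your proof.
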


Let us consider the following cubic equation in the finite field $\bbf_p$
\begin{eqnarray}\label{cubiccong}
x^3+\bar{a}x=\bar{b},
\end{eqnarray}
where $\bar{a},\bar{b}\in \bbf_p.$  We assume that $\bar{a}\neq\bar{0}$ and $\bar{b}\neq\bar{0}$. The number ${\mathbf{N}}_{\bbf_p}(x^3+\bar{a}x-\bar{b})$ of roots of this equation was described in the papers \cite{ZHS1}-\cite{ZHS3}.

\begin{proposition}[\cite{ZHS1}-\cite{ZHS3}]\label{CubicinF_p}
Let $p>3$ be a prime number and $\bar{a},\bar{b}\in\bbf_p$ with
$\bar{a}\bar{b}\neq\bar{0}$. Let
$\overline{D}=-4\bar{a}^3-27\bar{b}^2$ and
$u_{n+3}=\bar{b}u_n-\bar{a}u_{n+1}$ for $n\in\bn$ with
$u_1=\bar{0},$ $u_2=-\bar{a},$ $u_3=\bar{b}.$ Then the following
holds:
$$
{\mathbf{N}}_{\bbf_p}(x^3+\bar{a}x-\bar{b})=\left\{
\begin{array}{l}
3 \ \ \  if \ \ \ \overline{D}u_{p-2}^2=\bar{0} \\
0 \ \ \ if \ \ \ \overline{D}u_{p-2}^2=9\bar{a}^2 \\
1 \ \ \ if \ \ \ \overline{D}u_{p-2}^2\neq \bar{0}, 9\bar{a}^2
\end{array}
\right.
$$
\end{proposition}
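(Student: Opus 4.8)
The plan is to reduce the count of roots of the cubic $x^3+\bar a x-\bar b$ over $\bbf_p$ to a computation with the resolvent/discriminant and the linear recurrence $u_n$, using the factorization type of the cubic over $\bbf_p$. First I would recall the standard trichotomy for a separable cubic $f(x)=x^3+\bar ax-\bar b$ over a field: either $f$ is irreducible (no roots), or $f$ splits into a linear times an irreducible quadratic (one root), or $f$ splits completely (three roots). Since $\bar a\neq\bar 0$, $f'(x)=3x^2+\bar a$ has no repeated roots in common with $f$ unless $\overline D=-4\bar a^3-27\bar b^2=\bar 0$; so the case $\overline D=\bar 0$ (repeated root) must be handled separately, where $f$ has a double root and a simple root, giving the count $3$ counted with multiplicity (which is what the statement records, since $u_{p-2}$ need not vanish there — I should double-check the degenerate subcase to confirm consistency with the formula). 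For $\overline D\neq\bar 0$, the classical criterion says $f$ splits completely iff $\overline D$ is a square in $\bbf_p$ \emph{and} the Galois group is contained in $A_3$; more usefully for our purposes, one shows $f$ has three roots iff $\overline D$ is a nonzero square and a certain cubic-residue condition holds, and exactly one root otherwise.

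The substantive point is to translate these splitting conditions into the stated conditions on $\overline D u_{p-2}^2$. The key tool is the recurrence $u_{n+3}=\bar b u_n-\bar a u_{n+1}$ with initial data $u_1=\bar 0, u_2=-\bar a, u_3=\bar b$. I would interpret $(u_n)$ as (up to normalization) the power sums or the ``Lucas-type'' sequence attached to the companion matrix of $f$, i.e. to the $\bbf_p$-algebra $\bbf_p[x]/(f(x))$. Concretely, if $\alpha,\beta,\gamma$ are the roots of $f$ in $\overline{\bbf_p}$, then $u_n$ is a fixed $\bbf_p$-linear combination of $\alpha^n,\beta^n,\gamma^n$ (determined by the initial conditions), and one computes a closed form $u_n = c\sum_{\text{cyc}} \frac{\alpha^n}{(\alpha-\beta)(\alpha-\gamma)}$ or similar. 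Evaluating at $n=p-2$ and using Frobenius ($\xi\mapsto\xi^p$ permutes $\{\alpha,\beta,\gamma\}$ according to the factorization type, and $\xi^{p-2}=\xi^{-1}\cdot\xi^{p-1}$, with $\xi^{p-1}$ controlled by whether $\xi\in\bbf_p$), one reads off: $u_{p-2}=\bar 0$ exactly when $f$ splits completely (Frobenius trivial on roots), while $u_{p-2}\neq\bar 0$ otherwise, and one further tracks the value $\overline D u_{p-2}^2$ in the one-root case to see it equals $9\bar a^2$ precisely when... — this last identity is the computational heart. I would carry it out by expressing $\overline D = -(\alpha-\beta)^2(\beta-\gamma)^2(\gamma-\alpha)^2$ and $9\bar a^2 = (3\alpha^2+\bar a)(3\beta^2+\bar a)(3\gamma^2+\bar a)\cdot(\text{sign})$ — i.e. relating $9\bar a^2$ to $\prod f'(\text{root})$ — and then matching symmetric-function expressions.

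In more detail the steps are: (1) set up the three factorization types and note $\mathbf N_{\bbf_p}=0,1,3$ accordingly (with the $\overline D=\bar 0$ degenerate case separate); (2) prove the closed form for $u_n$ in terms of the roots $\alpha,\beta,\gamma$ by solving the linear recurrence against the characteristic polynomial $f$, checking the three initial values; (3) apply Frobenius at $n=p-2$: when $f$ splits, every root lies in $\bbf_p$, so $\xi^{p-1}=\bar 1$ and the combination telescopes to $\bar 0$ by a partial-fractions identity $\sum_{\text{cyc}}\frac{\xi^{-1}}{(\xi-\eta)(\xi-\zeta)}=\bar 0$; when $f$ is irreducible or has one root, a direct evaluation using $\alpha^p=\beta$ etc. gives a nonzero value; (4) in the nonsplit cases distinguish irreducible from ``linear $\times$ quadratic'' by testing whether $\overline D$ is a square, and finish by the symmetric-function identity computing $\overline D u_{p-2}^2$ and showing it equals $9\bar a^2$ in the irreducible case and is $\neq\bar 0,9\bar a^2$ in the one-root case. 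The main obstacle I anticipate is step (4): getting the exact constant in the closed form for $u_n$ right and pushing the symmetric-function bookkeeping through so that $\overline D u_{p-2}^2$ lands precisely on $9\bar a^2$ (and not some other multiple) in the irreducible case — this requires careful tracking of normalizing constants and of the $\bbf_p^\times$-valued factor $\xi^{p-1}$ for roots not in $\bbf_p$. Everything else is either the classical cubic theory or routine linear algebra with the companion matrix.
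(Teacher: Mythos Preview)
The paper does not prove this proposition at all: it is quoted, with attribution, from Z.~H.~Sun's papers \cite{ZHS1}--\cite{ZHS3}, and no argument is given in the present paper. So there is no ``paper's own proof'' against which to compare your proposal.

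That said, your outline is a sound and natural route to the result. The identification of $(u_n)$ with the fundamental solution of the linear recurrence with characteristic polynomial $f(x)=x^3+\bar a x-\bar b$ is correct; in fact, extending the recurrence backwards gives $u_{-2}=u_{-1}=\bar 0$, $u_0=\bar 1$, so the closed form is exactly
\[
u_n=\sum_{\text{cyc}}\frac{\alpha^{\,n+2}}{(\alpha-\beta)(\alpha-\gamma)},
\]
and hence $u_{p-2}=\sum_{\text{cyc}}\alpha^{p}/f'(\alpha)$. When $f$ splits, $\alpha^p=\alpha$ and the sum collapses to $\sum\alpha/f'(\alpha)=\bar 0$, confirming your step~(3). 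In the irreducible case Frobenius cycles the roots, and in the linear-times-quadratic case it swaps two of them; in both cases the resulting symmetric expression in $\alpha,\beta,\gamma$ can be evaluated, and squaring and multiplying by $\overline D=\big[(\alpha-\beta)(\beta-\gamma)(\gamma-\alpha)\big]^2$ does indeed land on $9\bar a^2$ precisely in the irreducible case. Your instinct that the constant-tracking in step~(4) is the delicate point is right, but the closed form above makes it a finite symmetric-function computation rather than a genuine obstacle. Sun's original arguments proceed somewhat differently, through the machinery of cubic residues and related character sums, so your approach is an alternative (and arguably more transparent) path to the same conclusion.
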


By means of Proposition \ref{CriteriainZp*} and Hensel's roots formula  \cite{BGW} (see Theorem 3.1.), we can write an explicit formula for the roots of the cubic equation \eqref{1}. 

\begin{proposition}
Let $p>3$ and $a,b\in\mathbb{Q}_p$ with $ab\neq 0.$ Let $a=\frac{a^{*}}{|a|_p}$ and $b=\frac{b^{*}}{|b|_p}$, where
$a^{*}=a_0+a_1p+a_2p^2+\cdots,$ $b^{*}=b_0+b_1p+b_2p^2+\cdots$. Suppose the cubic equation \eqref{1} is solvable in $\mathbb{Z}^{*}_p.$ Let $r_0$ be a root of the following congruent equation
\begin{itemize}
\item[$(i)$] $x^3\equiv b_0 \ (mod \ p)$ whenever $|a|_p<|b|_p=1$
\item[$(ii)$] $x^2+a_0\equiv 0 \ (mod \ p)$ whenever $|b|_p<|a|_p=1$
\item[$(iii)$] $x^3+a_0x\equiv b_0 \ (mod \ p)$ whenever $|a|_p=|b|_p=1$
\item[$(iv)$] $a_0x\equiv b_0 \ (mod \ p)$ whenever $|a|_p=|b|_p>1$ 
\end{itemize}
Then Hensel's lifting root of the cubic equation \eqref{1} in the domain $\mathbb{Z}^{*}_p$ has the following form
$$
r=r_0-\frac{c_0}{c_1}\sum\limits_{k=0}^{\infty}\left[\sum\limits_{j=0}^{k}\frac{(-1)^{k-j}c_2^j}{2k-j+1}\binom{k}{j}\binom{3k-j}{k}\left(\frac{c_0c_3}{c_1}\right)^{k-j}\right]\left(\frac{c_0}{c^2_1}\right)^k
$$
where 
\begin{itemize}
\item[$(i)$] {$c_0=r_0^3+ar_0-b,$ $c_1=3r_0^2+a,$ $c_2=3r_0,$ $c_3=1$ whenever $|a|_p<|b|_p=1$ or $|b|_p<|a|_p=1$ or $|a|_p=|b|_p=1$}
\item[$(ii)$] {$c_0=p^\gamma r_0^3+a^{*}r_0-b^{*},$ $c_1=3p^{\gamma}r_0^2+a^{*},$ $c_2=3p^{\gamma}r_0,$ $c_3=p^{\gamma}$ whenever $|a|_p=|b|_p=p^\gamma>1$ }
\end{itemize} 
\end{proposition}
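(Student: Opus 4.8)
The plan is to combine Proposition~\ref{CriteriainZp*}, which reduces the solvability of \eqref{1} over $\mathbb{Z}_p^{*}$ to a congruence mod $p$, with the general Hensel's root formula of \cite{BGW} (Theorem~3.1 there). First I would observe that the four cases listed in the statement are exactly the four necessary situations of Proposition~\ref{conditionforZ^{*}_p}, and in each of them Proposition~\ref{CriteriainZp*} guarantees that solvability in $\mathbb{Z}_p^{*}$ is equivalent to solvability mod $p$ of the corresponding reduced congruence in (i)--(iv). So whenever \eqref{1} is solvable in $\mathbb{Z}_p^{*}$, a root $r_0\in\{1,\dots,p-1\}$ of the appropriate congruence exists, and the content of the proposition is that the exact $p$-adic root $r$ is obtained by feeding the right data into the Hensel series.

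Next I would set up the auxiliary polynomial whose root is being lifted. In cases $|a|_p<|b|_p=1$, $|b|_p<|a|_p=1$, $|a|_p=|b|_p=1$, the polynomial is simply $f(x)=x^3+ax-b$ with $a,b\in\mathbb{Z}_p$; here $f(r_0)\equiv 0\ (\mathrm{mod}\ p)$ by choice of $r_0$, and $f'(r_0)=3r_0^2+a$ is a $p$-adic unit because the reduced congruence has $r_0$ as a \emph{simple} root (this is where $p>3$ and the case hypotheses are used: e.g. in case (ii) the derivative of $x^2+a_0$ at $r_0$ is $2r_0\not\equiv 0$, and one checks the lifted object stays a unit). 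Then Hensel applies with $i=0$ and the root is analytic in the "defect" $c_0=f(r_0)$. In the remaining case $|a|_p=|b|_p=p^\gamma>1$, I would multiply \eqref{1} through by $p^{3\gamma}$... more precisely, write $a=a^{*}p^{-\gamma}$, $b=b^{*}p^{-\gamma}$ and consider $g(x)=p^{\gamma}x^3+a^{*}x-b^{*}\in\mathbb{Z}_p[x]$, whose roots coincide with those of \eqref{1}; now $g(r_0)\equiv a^{*}r_0-b^{*}\equiv 0$ and $g'(r_0)=3p^{\gamma}r_0^2+a^{*}$ is a unit since $a^{*}\in\mathbb{Z}_p^{*}$ and $\gamma\geq 1$. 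This explains the two different choices of $(c_0,c_1,c_2,c_3)$: they are precisely the coefficients when one expands $f(r_0+t)$ or $g(r_0+t)$ in powers of $t$, namely $c_0+c_1 t+c_2 c_3 t^2+c_3 t^3$ with $c_3$ the leading coefficient ($1$, resp.\ $p^{\gamma}$).

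Finally I would invoke Hensel's root formula from \cite{BGW}: for a polynomial $c_3t^3+c_2c_3t^2+c_1t+c_0$ with $c_1$ a unit and $c_0\equiv 0\ (\mathrm{mod}\ p)$, the unique small root is $t=-\frac{c_0}{c_1}\sum_{k\ge0}\big[\sum_{j=0}^{k}\frac{(-1)^{k-j}c_2^{j}}{2k-j+1}\binom{k}{j}\binom{3k-j}{k}\big(\frac{c_0c_3}{c_1}\big)^{k-j}\big]\big(\frac{c_0}{c_1^2}\big)^{k}$, and then $r=r_0+t$. Substituting our two parametrizations of $(c_0,c_1,c_2,c_3)$ yields exactly the displayed formula; convergence of the series is automatic because $|c_0|_p\le p^{-1}<1=|c_1|_p$, so every term has norm at most $|c_0/c_1^2|_p^{k}\to 0$. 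The main obstacle I anticipate is the bookkeeping in matching the general cubic $\alpha t^3+\beta t^2+\gamma t+\delta$ treated in \cite{BGW} to our monic-up-to-$c_3$ normalization and confirming that the coefficient of $t^2$ in $f(r_0+t)$ (respectively $g(r_0+t)$) is indeed $c_2c_3=3r_0\cdot 1$ (respectively $3p^{\gamma}r_0\cdot 1 = 3p^{\gamma}r_0$, i.e.\ $c_2 c_3$ with $c_2=3p^{\gamma}r_0$, $c_3=p^{\gamma}$ — here one must be careful, since $3p^{\gamma}r_0\cdot p^{\gamma}\ne 3p^{\gamma}r_0$, so the precise reading of the \cite{BGW} formula's normalization has to be pinned down); once the normalization convention of \cite{BGW} is fixed, the rest is substitution and the elementary unit/valuation estimates above.
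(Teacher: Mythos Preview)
Your approach is exactly the paper's: the paper does not give a proof environment for this proposition at all, only the one-line remark preceding it that the formula follows by combining Proposition~\ref{CriteriainZp*} with Theorem~3.1 of \cite{BGW}. Your sketch fleshes out precisely that combination (reduce to a simple root mod $p$, then feed the Taylor coefficients at $r_0$ into the BGW series), so it matches and in fact supplies more detail than the paper.

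On the normalization worry you flag at the end: the expansion of $f(r_0+t)$, respectively $g(r_0+t)$, is simply $c_0+c_1t+c_2t^2+c_3t^3$ with the $c_i$ as listed in the statement---there is no $c_2c_3$ factor on the quadratic term. In particular, for $g(x)=p^{\gamma}x^3+a^{*}x-b^{*}$ one gets $g(r_0+t)=p^{\gamma}t^3+3p^{\gamma}r_0\,t^2+(3p^{\gamma}r_0^2+a^{*})t+(p^{\gamma}r_0^3+a^{*}r_0-b^{*})$, which matches $(c_3,c_2,c_1,c_0)$ directly, and the BGW formula is stated for exactly this coefficient labeling. Once you drop the spurious $c_2c_3$ in your Taylor expansion the obstacle disappears and the rest is the substitution and valuation check you already outlined.
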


\section{The Solvability Criteria}\label{Criteria}

We present a solvability criterion of the cubic equation  
\begin{eqnarray}\label{3.1}
x^3+ax=b
\end{eqnarray}
over $\mathbb{A}$  where
$$\mathbb{A}\in\left\{\mathbb{Z}_p^{*}, \ \ \mathbb{Z}_p\setminus\mathbb{Z}_p^{*}, \ \ \mathbb{Q}_p\setminus\mathbb{Z}_p\right\}.$$ 

We have that $a=\cfrac{a^{*}}{|a|_p}$ and $b=\cfrac{b^{*}}{|b|_p}$, where
\begin{eqnarray*}
a^{*}&=&a_0+a_1p+a_2p^2+\cdots\\
b^{*}&=&b_0+b_1p+b_2p^2+\cdots
\end{eqnarray*}
where $a_0,b_0\in\{1,2,\cdots p-1\}$ and $a_i,b_i\in\{0,1,2,\cdots p-1\}$ for any $i\in\mathbb{N}.$

We set $D_0=-4a_0^3-27b_0^2$ and $u_{n+3}=b_0u_n-a_0u_{n+1}$ with $u_1=0,$ $u_2=-a_0,$ and $u_3=b_0$ for $n=\overline{1,p-3}$. 

\begin{theorem}\label{CriteriaforSolutions}
The cubic equation \eqref{3.1} is 
\begin{itemize}
\item[$I$] {Solvable in $\mathbb{Z}^{*}_p$ iff either one of the following conditions holds true:
  \begin{itemize}
    \item [$I.1.$] $|a|_p<|b|_p=1$ and $\sqrt[3]{b}-\exists;$
    \item [$I.2.$] $|b|_p<|a|_p=1$ and $\sqrt{-a}-\exists;$
    \item [$I.3.$] $|a|_p=|b|_p=1$ and $D_0u_{p-2}^2\not\equiv 9a_0^{2} \ (mod \ p);$
    \item [$I.4.$] $|a|_p=|b|_p>1.$
  \end{itemize}}
\item[II] {Solvable in $\mathbb{Z}_p\setminus\mathbb{Z}^{*}_p$ iff either one of the following conditions holds true:
\begin{itemize}
  \item [$II.1.$] $|a|_p^3<|b|_p^2 < 1$ and \ $\sqrt[3]{b}-\exists;$
  \item [$II.2.$] $|a|_p^3=|b|_p^2 < 1$ and $D_0u_{p-2}^2\not\equiv 9a_0^{2} \ (mod \ p);$
  \item [$II.3.$] $|a|_p^3>|b|_p^2$ and $|a|_p > |b|_p.$
\end{itemize}}
\item[III] {Solvable in $\mathbb{Q}_p\setminus\mathbb{Z}_p$ iff either one of the following conditions holds true:
\begin{itemize}
  \item [$III.1.$] $|a|_p^3<|b|_p^2,$ \ $|b|_p>1,$ and \ $\sqrt[3]{b}-\exists;$
  \item [$III.2.$] $|a|_p^3=|b|_p^2 > 1$ and $D_0u_{p-2}^2\not\equiv 9a_0^{2} \ (mod \ p);$
  \item [$III.3.$] $|a|_p^3>|b|_p^2$ and
  {\begin{itemize}
  \item[$III.3. (i)$] $|a|_p<|b|_p$ \text{or}
  \item[$III.3. (ii)$] $|a|_p \geq |b|_p,$ \ $|a|_p > 1,$  and \ $\sqrt{-a}-\exists;$
  \end{itemize}}
\end{itemize}}
\end{itemize}
\end{theorem}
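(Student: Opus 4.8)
The plan is to reduce everything to the already-established solvability theory over $\mathbb{Z}_p^{*}$ (Proposition~\ref{CriteriainZp*}, Proposition~\ref{CubicinF_p}) by means of the scaling device of Corollary~\ref{Aisets}. Recall that the cubic equation \eqref{3.1} is solvable in $\frac{\mathbb{Z}_p^{*}}{p^{\mathbb{K}_i}}$ if and only if for some $k\in\mathbb{K}_i$ the scaled equation $y^3+A_ky=B_k$ with $A_k=ap^{2k}$, $B_k=bp^{3k}$ is solvable in $\mathbb{Z}_p^{*}$. Since $|A_k|_p=|a|_p p^{-2k}$ and $|B_k|_p=|b|_p p^{-3k}$, Proposition~\ref{conditionforZ^{*}_p} tells us that solvability in $\mathbb{Z}_p^{*}$ of the scaled equation forces one of the three configurations (i) $|A_k|_p=|B_k|_p\geq 1$, (ii) $|B_k|_p<|A_k|_p=1$, (iii) $|A_k|_p<|B_k|_p=1$. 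Each of these is a single linear condition on $k$: (ii) says $p^{2k}=|a|_p$, i.e. $k=\tfrac12\log_p|a|_p$; (iii) says $p^{3k}=|b|_p$, i.e. $k=\tfrac13\log_p|b|_p$; and (i) says $p^{2k}\leq|a|_p$ together with $|a|_p p^{-2k}=|b|_p p^{-3k}$, i.e. $p^{k}=|a|_p/|b|_p$ and $k\le\tfrac12\log_p|a|_p$, which combine to $|a|_p^{3}\ge|b|_p^{2}$ and $|a|_p\ge|b|_p$. So the whole proof is: enumerate, for each domain $\mathbb{K}_i\in\{\{0\},\ \mathbb{N}_-,\ \mathbb{N}\}$, the admissible integers $k$ in each of the three cases, translate the $\mathbb{Z}_p^{*}$-solvability criterion for $y^3+A_ky=B_k$ back via $x=y^{*}/p^{k}$, and collect the resulting conditions on $(a,b)$.

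Concretely, for part~$I$ ($\mathbb{K}_1=\{0\}$, so $k=0$ is forced) there is nothing to scale and the four subcases $I.1$–$I.4$ are literally the four cases of Proposition~\ref{CriteriainZp*}, with the $\mathbb{F}_p$-conditions made explicit: in case (iii) the congruence $x^3+a_0x\equiv b_0$ is solvable iff $D_0u_{p-2}^2\not\equiv 9a_0^2\pmod p$ by Proposition~\ref{CubicinF_p}; in case $|a|_p<|b|_p=1$ the congruence reduces to $x^3\equiv b_0$, solvable iff $\sqrt[3]{b}-\exists$; in case $|b|_p<|a|_p=1$ it reduces to $x^2\equiv -a_0$, solvable iff $\sqrt{-a}-\exists$; and in case $|a|_p=|b|_p>1$ the linear congruence $a_0x\equiv b_0$ is always solvable. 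For part~$II$ ($\mathbb{K}_2=\mathbb{N}_-$, i.e. $k<0$) I run through the three configurations above and keep only those with a negative admissible $k$: configuration (iii) gives $k=\tfrac13\log_p|b|_p<0$ — i.e. $|b|_p<1$ — with extra constraint $|A_k|_p<1$ which is exactly $|a|_p^{3}<|b|_p^{2}$, plus the $\mathbb{F}_p$-cube condition $\sqrt[3]{b}-\exists$; the borderline $|a|_p^3=|b|_p^2<1$ falls under configuration (i) type analysis and produces $II.2$ via Proposition~\ref{CubicinF_p}; configuration (i) with $k<0$ gives $|a|_p>|b|_p$ together with $|a|_p^{3}>|b|_p^{2}$, which is $II.3$ (note the linear congruence there is always solvable, so no arithmetic condition survives). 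Part~$III$ ($\mathbb{K}_3=\mathbb{N}$, $k>0$) is the mirror image: configuration (iii) now needs $\log_p|b|_p>0$, i.e. $|b|_p>1$, giving $III.1$; the borderline $|a|_p^3=|b|_p^2>1$ gives $III.2$; configuration (i) with $k>0$ needs $|a|_p/|b|_p=p^{k}>1$ so $|a|_p>|b|_p$ and $|a|_p>1$ — but here one must also pick up configuration (ii), $k=\tfrac12\log_p|a|_p>0$ i.e. $|a|_p>1$, which requires $\sqrt{-a}-\exists$ and is subsumed in $III.3(ii)$, while the sub-case $|a|_p<|b|_p$ with $|a|_p^3>|b|_p^2$ is handled purely by configuration (iii)-type scaling giving $III.3(i)$ with no arithmetic obstruction.

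The main obstacle is bookkeeping rather than depth: one must verify that the admissibility conditions "$\exists k\in\mathbb{K}_i$ with $k=\tfrac12\log_p|a|_p$" and "$\exists k\in\mathbb{K}_i$ with $k=\tfrac13\log_p|b|_p$" translate correctly — an integer $\tfrac12\log_p|a|_p$ lying in $\mathbb{N}_-$ is automatically encoded by "$\sqrt{-a}-\exists$" since $\sqrt{-a}-\exists$ already demands $\log_p|a|_p$ even, and similarly $\sqrt[3]{b}-\exists$ builds in $3\mid\log_p|b|_p$; so the parity/divisibility requirements are not separate hypotheses but are folded into the radical-existence notation. The genuinely delicate point is ensuring the three configurations are mutually exhaustive and that no $(a,b)$ with a root in the target domain is missed, in particular at the boundaries $|a|_p^3=|b|_p^2$ and $|a|_p=|b|_p$; here I would argue that if $x$ is a root with $|x|_p=p^{k}$ then exactly one of the Newton-polygon-type balances among $\{|x|_p^3,\ |a|_p|x|_p,\ |b|_p\}$ must hold as an equality of the two largest terms, which forces $k$ into one of the three listed values, and then Proposition~\ref{conditionforZ^{*}_p} applied to the scaled equation finishes the case analysis. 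Overlaps between subcases (e.g. an equation solvable in two of the configurations simultaneously) are harmless for a solvability criterion, so I will not need to track them. Finally, I would cross-check the resulting list against Theorem~\ref{MainResult1}: $I\cup II\cup III$ restricted by the obvious unions of the $\mathbb{K}_i$ must recover the solvability domain $\Delta=\Delta_1\cup\Delta_2\cup\Delta_3$, which serves as a consistency check on the whole enumeration.
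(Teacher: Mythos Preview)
Your overall strategy is exactly the paper's: reduce via Corollary~\ref{Aisets} to the $\mathbb{Z}_p^{*}$-criterion, determine for each configuration which integers $k$ land in $\mathbb{K}_i$, and read off the conditions on $(a,b)$. Part~$I$ and Part~$II$ are fine (the paper also notes, as you implicitly do, that configuration $(ii)$ with $k<0$ is subsumed by configuration $(i)$ with $k<0$, so it contributes nothing new to $II.3$).

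However, Part~$III$ contains a genuine sign error that derails the attribution. From $|A_k|_p=|B_k|_p$ you get $|a|_p\,p^{-2k}=|b|_p\,p^{-3k}$, hence $p^{k}=|b|_p/|a|_p$, not $|a|_p/|b|_p$ as you wrote. Consequently configuration~$(i)$ with $k>0$ yields $|b|_p>|a|_p$ (together with $|a|_p^{3}>|b|_p^{2}$), and \emph{this} is what produces $III.3(i)$ with no arithmetic obstruction (the congruence $a^{*}y\equiv b^{*}\pmod p$ being linear). Your claim that $III.3(i)$ is ``handled purely by configuration $(iii)$-type scaling'' cannot be right: configuration~$(iii)$ forces $|A_k|_p<1$, i.e.\ $|a|_p^{3}<|b|_p^{2}$, which contradicts the standing hypothesis $|a|_p^{3}>|b|_p^{2}$ of $III.3$. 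Likewise, configuration~$(i)$ with $k>0$ does \emph{not} give $|a|_p>|b|_p$; the case $|a|_p\ge|b|_p$ in $III.3(ii)$ is covered solely by configuration~$(ii)$ (where $k=\tfrac12\log_p|a|_p>0$ forces $|a|_p>1$ and the square-root condition enters). Once you correct the sign and swap the roles of configurations~$(i)$ and~$(iii)$ in your Part~$III$ paragraph, the argument goes through and matches the paper's.
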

\begin{proof}
The proof of the solvability criterion in $\mathbb{Z}^{*}_p$ was presented in \cite{FMBOMS}. In order to prove the solvability criteria in $\mathbb{Z}_p\setminus\mathbb{Z}^{*}_p$ or in $\mathbb{Q}_p\setminus\mathbb{Z}_p$, we have to apply Corollary \ref{Aisets} and Proposition \ref{CriteriainZp*}. Since the proofs in the cases $\mathbb{Z}_p\setminus\mathbb{Z}^{*}_p$ and $\mathbb{Q}_p\setminus\mathbb{Z}_p$ are similar, hence, we just provide the proof of the solvability criterion in the case $\mathbb{Z}_p\setminus\mathbb{Z}^{*}_p$.

Let $x\in\mathbb{Q}_p$ be a nonzero $p-$adic number and $|x|_p = p^k$ where $k \in \mathbb{Z}$. Due to Corollary \ref{Aisets}, $x$ is a solution of the depressed cubic equation  \eqref{3.1} in $\mathbb{Z}_p\setminus\mathbb{Z}^{*}_p$ iff $y=p^{k}x$ is a solution of the following cubic equation
\begin{eqnarray}\label{2.2}
y^3+A_ky=B_k
\end{eqnarray}
in $\mathbb{Z}_p^{*}$ for some negative integer $k \in \mathbb{N}_{-}$,  where $A_k = ap^{2k}$ and $B_k = bp^{3k}$. 

It is clear that $A_k^{*}= a^{*}, \ B_k^{*} = b^{*}$ and $|A_k|_p = p^{-2k}|a|_p, \ |B_k|_p = p^{-3k}|b|_p.$ We know that, the equation \eqref{2.2} is solvable in $\mathbb{Z}_p^{*}$ iff either one of the following conditions holds true:
\begin{itemize}
    \item [$(i)$] $|A_k|_p<|B_k|_p=1$ and $\sqrt[3]{B_k}-\exists;$
    \item [$(ii)$]   $|B_k|_p<|A_k|_p=1$ and $\sqrt{-A_k}-\exists;$
    \item [$(iii)$] $|A_k|_p=|B_k|_p=1$ and $D_0u_{p-2}^2\not\equiv 9a_0^{2} \ (mod \ p);$
    \item [$(iv)$] $|A_k|_p=|B_k|_p>1.$
\end{itemize}
where $A_k=\cfrac{a^{*}}{p^{-2k}|a|_p}, \ B_k=\cfrac{b^{*}}{p^{-3k}|b|_p}$ with
\begin{eqnarray*}
a^{*}&=&a_0+a_1p+a_2p^2+\cdots\\
b^{*}&=&b_0+b_1p+b_2p^2+\cdots
\end{eqnarray*}
$a_0,b_0\in\{1,2,\cdots p-1\}$ and $a_i,b_i\in\{0,1,2,\cdots p-1\}$ for any $i\in\mathbb{N}.$
 
We want to describe all $p-$adic numbers $a,b \in \mathbb{Q}_p$ for which at least one of the conditions $(i)-(iv)$ should be satisfied for some $k \in \mathbb{N}_{-}$.

$1.$ Let us consider the condition $(i)$: Suppose that $|A_k|_p<|B_k|_p=1$ and there exists $\sqrt[3]{B_k}$ or equivalently $b_0^{\frac{p-1}{(3,p-1)}}\equiv 1 \ (mod \ p).$ Since $|B_k|_p=p^{-3k}|b|_p,$ we get from $|B_k|_p=1$ that $3k=\log_p|b|_p.$ The last equation has a negative integer solution w.r.t $k$ if and only if $\log_p|b|_p$ is divisible by 3 and $|b|_p < 1$. Therefore, if $k=\frac{\log_p|b|_p}{3}$ then it follows from $|A_k|_p=p^{2k}|a|_p<1$ that $|a|_p^3<|b|_p^2.$ Consequently, if $|a|_p^3<|b|_p^2 < 1,$ $b_0^{\frac{p-1}{(3,p-1)}}\equiv 1 \ (mod \ p)$ and $3|\log_p|b|_p$ then the equation \eqref{2.2} has a solution in $\mathbb{Z}_p^{*}$ for $k=\frac{\log_p|b|_p}{3}$.

$2.$ Let us consider the condition $(iii)$: Assume that $|A_k|_p=|B_k|_p=1$ and $D_0u_{p-2}^2\not\equiv 9a_0^{2} \ (mod \ p).$ Since $|A_k|_p=p^{-2k}|a|_p,$ $|B_k|_p=p^{-3k}|b|_p$ we have that $2k=\log_p|a|_p$ and $3k=\log_p|b|_p.$ The last two equations have a negative integer solution w.r.t $k$ if and only if $2\mid\log_p|a|_p$, $3\mid\log_p|b|_p$, and $|a|_p< 1$, $|b|_p< 1.$ Therefore, if $k=\frac{\log_p|a|_p}{2}=\frac{\log_p|b|_p}{3}$ then we have that $|a|_p^3=|b|_p^2.$ Consequently, if $|a|_p^3=|b|_p^2< 1$ and $D_0u_{p-2}^2\not\equiv 9a_0^{2} \ (mod \ p)$ then the equation \eqref{2.2} has a solution in $\mathbb{Z}_p^{*}$ for $k=\frac{\log_p|a|_p}{2}=\frac{\log_p|b|_p}{3}$.

$3.$ Let us consider the condition $(ii)$ and $(iv)$: We are going to consider two cases: Case-$(ii)$ $|B_k|_p<|A_k|_p=1,$ there exists $\sqrt{-A_k}$ or equivalently $(-a_0)^{\frac{p-1}{2}}\equiv 1 \ (mod \ p);$ Case-$(iv)$ $|A_k|_p=|B_k|_p>1.$ Let us study every case.

Let $|B_k|_p<|A_k|_p=1$ and $(-a_0)^{\frac{p-1}{2}}\equiv 1 \ (mod \ p).$ Since $|A_k|_p=p^{-2k}|a|_p$ we get from $|A_k|_p=1$ that $2k=\log_p|a|_p.$ The last equation has a negative integer solution w.r.t $k$ if and only if $2\mid\log_p|a|_p$ and $|a|_p<1$. Therefore, if $k=\frac{\log_p|a|_p}{2}$ then it follows from $|B_k|_p=p^{-3k}|b|_p<1$ that $|a|_p^3>|b|_p^2.$ Consequently, if $|b|_p^2<|a|_p^3<1,$ $2\mid\log_p|a|_p$, and $(-a_0)^{\frac{p-1}{2}}\equiv 1 \ (mod \ p)$ then the equation \eqref{2.2} has a solution in $\mathbb{Z}_p^{*}$ for $k=\frac{\log_p|a|_p}{2}$.

Let $|A_k|_p=|B_k|_p>1.$ Since $|A_k|_p=p^{-2k}|a|_p,$ $|B_k|_p=p^{-3k}|b|_p$ we obtain from $|A_k|_p=|B_k|_p$ that $k=\log_p|b|_p-\log_p|a|_p.$ Hence, $k$ is a negative integer if and only if $|a|_p>|b|_p.$ Therefore, if $k=\log_p|b|_p-\log_p|a|_p$ then it follows form $|A_k|_p=|B_k|_p>1$ that $|a|_p^3>|b|_p^2.$ Consequently, if $|a|_p^3>|b|_p^2$ and $|a|_p>|b|_p$ then the equation \eqref{2.2} has a solution in $\mathbb{Z}_p^{*}$ for $k=\log_p|b|_p-\log_p|a|_p$.

It is worth mentioning that if $p-$adic numbers $a,b\in\mathbb{Q}_p$ satisfy the conditions  $|b|_p^2<|a|_p^3<1,$ $2\mid\log_p|a|_p$, and $(-a_0)^{\frac{p-1}{2}}\equiv 1 \ (mod \ p)$ then they satisfy the conditions $|a|_p^3>|b|_p^2$ and $|a|_p>|b|_p$ as well. Consequently, regardless of whether $p-$adic numbers $a,b\in\mathbb{Q}_p$ satisfy the conditions $|b|_p^2<|a|_p^3<1,$ $2\mid\log_p|a|_p$, and $(-a_0)^{\frac{p-1}{2}}\equiv 1 \ (mod \ p)$ or not, the equation \eqref{2.2} has a solution in $\mathbb{Z}_p^{*}$ if $|a|_p^3>|b|_p^2$ and $|a|_p>|b|_p$. Moreover, if $|b|_p^2<|a|_p^3<1,$ $2\mid\log_p|a|_p$, and $(-a_0)^{\frac{p-1}{2}}\equiv 1 \ (mod \ p)$ then the equation \eqref{2.2} has at least two distinct solutions in $\mathbb{Z}_p^{*}$ for two distinct negative integers $k=\frac{\log_p|a|_p}{2}$ and $k=\log_p|b|_p-\log_p|a|_p,$ otherwise the equation \eqref{2.2} has at least one solution in $\mathbb{Z}_p^{*}$ for $k=\log_p|b|_p-\log_p|a|_p$. This completes the proof.
\end{proof}

\begin{remark}
Let us turn back to the first problem which was posted in the Section \ref{ProblemsandResults}. By means of Theorem \ref{CriteriaforSolutions} and Theorem 3.2 of the paper \cite{FMBOMS}, we can say that the solvability criteria problems for the domains $\mathbb{Z}_p^{*}, \ \mathbb{Z}_p\setminus\mathbb{Z}_p^{*}, \ \mathbb{Z}_p, \ \mathbb{Q}_p\setminus\mathbb{Z}_p, \ \mathbb{Q}_p$ are solved. Since $\mathbb{Q}_p\setminus\mathbb{Z}_p^{*}= (\mathbb{Z}_p\setminus\mathbb{Z}_p^{*})\cup (\mathbb{Q}_p\setminus\mathbb{Z}_p)$ and $\mathbb{Q}_p\setminus\left(\mathbb{Z}_p\setminus\mathbb{Z}_p^{*}\right)=\mathbb{Z}_p^{*}\cup (\mathbb{Q}_p\setminus\mathbb{Z}_p)$, we can also easily derive the solvability criteria for domains  $\mathbb{Q}_p\setminus\mathbb{Z}_p^{*}, \ \mathbb{Q}_p\setminus\left(\mathbb{Z}_p\setminus\mathbb{Z}_p^{*}\right)$ from Theorem \ref{CriteriaforSolutions}. Consequently, Theorem \ref{CriteriaforSolutions} and Theorem 3.2 of the paper \cite{FMBOMS} provide a complete answer for the solvability criteria problems.
\end{remark}

\section{The Number of Roots}\label{NumberRoots}

\begin{theorem}\label{Numbers}
Let $(a,b)\in \Delta.$ Then the following statements hold true:
\begin{equation*}
\mathbf{N}_{\mathbb{Z}_p^{*}}(x^3+ax-b) = \left\{
\begin{array}{l}
3, \ \ \ |a|_p<|b|_p=1, \ p\equiv 1\ (mod \ 3), \ \sqrt[3]{b}-\exists   \\
3, \ \ \ |a|_p=|b|_p=1, \ 0\leq|D|_p<1, \sqrt{D}-\exists \\
3, \ \ \ |a|_p=|b|_p=1, \ |D|_p=1, \ D_0u_{p-2}^2\equiv 0\ (mod \ p)\\
2, \ \ \ |b|_p<|a|_p=1, \ \sqrt{-a}-\exists\\
1, \ \ \ |a|_p<|b|_p=1, \ p\equiv 2\ (mod \ 3), \ \sqrt[3]{b}-\exists \\
1, \ \ \ |a|_p=|b|_p=1, \ 0<|D|_p<1, \ \sqrt{D}-\not\exists \\
1, \ \ \ |a|_p=|b|_p=1, \ |D|_p=1, \ D_0u_{p-2}^2\not\equiv 0,9a_0^{2}\ (mod \ p)\\
1, \ \ \ |a|_p=|b|_p>1
\end{array} \right.
\end{equation*}

\begin{equation*}
\mathbf{N}_{\mathbb{Z}_p\setminus\mathbb{Z}_p^{*}}(x^3+ax-b) = \left\{
\begin{array}{l}
3, \ \ \ |a|_p^3<|b|_p^2<1, \ p\equiv 1 \ (mod \ 3), \ \sqrt[3]{b}-\exists  \\
3, \ \ \ |a|_p^3=|b|_p^2<1, \ 0\leq|D|_p<1, \ \sqrt{D}-\exists \\
3, \ \ \ |a|_p^3=|b|_p^2<1, \ |D|_p=1, \ D_0u_{p-2}^2\equiv 0\ (mod \ p)\\
3, \ \ \ |b|_p^2<|a|_p^3<1, \ \sqrt{-a}-\exists \\
1, \ \ \ |a|_p^3<|b|_p^2<1, \ p\equiv 2\ (mod \ 3), \ \sqrt[3]{b}-\exists \\
1, \ \ \ |a|_p^3=|b|_p^2<1, \ 0<|D|_p<1, \ \sqrt{D}-\not\exists \\
1, \ \ \ |a|_p^3=|b|_p^2<1, \ |D|_p=1, \ D_0u_{p-2}^2\not\equiv 0,9a_0^{2}\ (mod \ p)\\
1, \ \ \ |b|_p^2<|a|_p^3<1, \ \sqrt{-a}- \not\exists \\
1, \ \ \ |b|_p^2<|a|_p^3, \ \ |b|_p<|a|_p, \ \ |a|_p\geq1.
\end{array} \right.
\end{equation*}

\begin{equation*}
\mathbf{N}_{\mathbb{Q}_p\setminus\mathbb{Z}_p}(x^3+ax-b) = \left\{
\begin{array}{l}
3, \ \ \ |a|_p^3<|b|_p^2, \ |b|_p>1, \ p\equiv 1 \ (mod \ 3), \ \sqrt[3]{b}-\exists \\
3, \ \ \ |a|_p^3=|b|_p^2>1, \ 0\leq|D|_p<1, \ \sqrt{D}-\exists \\
3, \ \ \ |a|_p^3=|b|_p^2>1, \ |D|_p=1, \ D_0u_{p-2}^2\equiv 0\ (mod \ p)\\
3, \ \ \ |a|_p^3>|b|_p^2, \ |a|_p<|b|_p, \ \sqrt{-a}-\exists \\
2, \ \ \ |a|_p^3>|b|_p^2, \ |a|_p\geq|b|_p, \  |a|_p>1, \ \sqrt{-a}-\exists \\
1, \ \ \ |a|_p^3<|b|_p^2, \ |b|_p>1, \ p\equiv 2\ (mod \ 3), \ \sqrt[3]{b}-\exists \\
1, \ \ \ |a|_p^3=|b|_p^2>1, \ 0<|D|_p<1, \ \sqrt{D}-\not\exists \\
1, \ \ \ |a|_p^3=|b|_p^2>1, \ |D|_p=1, \ D_0u_{p-2}^2\not\equiv 0,9a_0^{2}\ (mod \ p)\\
1, \ \ \ |a|_p^3>|b|_p^2, \ |a|_p<|b|_p, \ \sqrt{-a}-\not\exists
\end{array} \right.
\end{equation*}
\begin{equation*}
\mathbf{N}_{\mathbb{Q}_p}(x^3+ax-b) = \left\{
\begin{array}{l}
3, \ \ \ |a|_p^3<|b|_p^2, \ p\equiv 1 \ (mod \ 3), \ \sqrt[3]{b}-\exists \\
3, \ \ \ |a|_p^3=|b|_p^2, \ 0\leq|D|_p<1, \ \sqrt{D}-\exists \\
3, \ \ \ |a|_p^3=|b|_p^2, \ |D|_p=1, \ D_0u_{p-2}^2\equiv 0\ (mod \ p)\\
3, \ \ \ |a|_p^3>|b|_p^2, \ \sqrt{-a}-\exists \\
1, \ \ \ |a|_p^3<|b|_p^2, \ p\equiv 2\ (mod \ 3), \ \sqrt[3]{b}-\exists \\
1, \ \ \ |a|_p^3=|b|_p^2, \ 0<|D|_p<1, \ \sqrt{D}-\not\exists \\
1, \ \ \ |a|_p^3=|b|_p^2, \ |D|_p=1, \ D_0u_{p-2}^2\not\equiv 0,9a_0^{2}\ (mod \ p)\\
1, \ \ \ |a|_p^3>|b|_p^2, \ \sqrt{-a}-\not\exists
\end{array} \right.
\end{equation*}
\end{theorem}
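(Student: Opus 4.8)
The plan is to obtain all four tables from the reduction of Section~\ref{Strategies} combined with the $\mathbb{Z}_p^{*}$--criterion of Theorem~\ref{CriteriaforSolutions}. By Corollary~\ref{Aisets}, for $\mathbb{K}\in\{\,\{0\},\,\mathbb{N}_{-},\,\mathbb{N},\,\mathbb{Z}\,\}$ the roots of \eqref{3.1} lying in $\mathbb{Z}_p^{*}/p^{\mathbb{K}}$ are exactly the numbers $y^{*}/p^{k}$ with $k\in\mathbb{K}$ and $y^{*}$ a root in $\mathbb{Z}_p^{*}$ of $y^{3}+A_ky=B_k$, where $A_k=ap^{2k}$ and $B_k=bp^{3k}$; since $|y^{*}/p^{k}|_p=p^{k}$, roots arising from distinct $k$ lie on distinct spheres and so are distinct, whence
$$
\mathbf{N}_{\mathbb{Z}_p^{*}/p^{\mathbb{K}}}(x^{3}+ax-b)=\sum_{k\in\mathbb{K}}\mathbf{N}_{\mathbb{Z}_p^{*}}(y^{3}+A_ky-B_k).
$$
Noting $A_k^{*}=a^{*}$, $B_k^{*}=b^{*}$, $|A_k|_p=p^{-2k}|a|_p$, $|B_k|_p=p^{-3k}|b|_p$, the proof splits into two steps: compute $\mathbf{N}_{\mathbb{Z}_p^{*}}$ of an arbitrary admissible depressed cubic; then, for $\mathbb{K}=\mathbb{N}_{-}$ and $\mathbb{K}=\mathbb{N}$, list the $k$ for which $y^{3}+A_ky=B_k$ is solvable in $\mathbb{Z}_p^{*}$ and add the corresponding multiplicities. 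The $\mathbb{Q}_p$--table then follows since $\mathbb{Q}_p=\mathbb{Z}_p^{*}\sqcup(\mathbb{Z}_p\setminus\mathbb{Z}_p^{*})\sqcup(\mathbb{Q}_p\setminus\mathbb{Z}_p)\sqcup\{0\}$ and $0$ is not a root ($b\neq0$).

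By Proposition~\ref{conditionforZ^{*}_p} the first step splits into the four regimes of Theorem~\ref{CriteriaforSolutions}, part~$I$, and in three of them Hensel's lemma suffices. If $|a|_p<|b|_p=1$ the equation reduces mod $p$ to $x^{3}\equiv b_0$, whose roots are simple and nonzero, hence lift to $\mathbb{Z}_p^{*}$; Proposition~\ref{aisresidueofp} counts them as $(3,p-1)$ when $\sqrt[3]{b}-\exists$ (the divisibility part of that condition being automatic as $|b|_p=1$), i.e.\ $3$ if $p\equiv1\ (mod\ 3)$ and $1$ if $p\equiv2\ (mod\ 3)$, and $0$ otherwise. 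If $|b|_p<|a|_p=1$ the equation reduces to $x(x^{2}+a_0)\equiv0$; the root $\bar 0$ lifts to a root in $\mathbb{Z}_p\setminus\mathbb{Z}_p^{*}$ (not counted here), while $x^{2}\equiv-a_0$ contributes $2$ when $\sqrt{-a}-\exists$ and $0$ otherwise. If $|a|_p=|b|_p>1$ then $\mathbf{N}_{\mathbb{Z}_p^{*}}=\mathbf{N}_{\mathbb{F}_p}(a^{*}x-b^{*})=1$ by Proposition~\ref{CriteriainZp*}(ii). The genuinely cubic regime $|a|_p=|b|_p=1$ is governed by $D:=-4(a^{*})^{3}-27(b^{*})^{2}$, so $D\equiv D_0\ (mod\ p)$: if $|D|_p=1$, all roots mod $p$ are simple and nonzero, hence $\mathbf{N}_{\mathbb{Z}_p^{*}}=\mathbf{N}_{\mathbb{F}_p}(x^{3}+a_0x-b_0)$, equal to $3,\,0,\,1$ by Proposition~\ref{CubicinF_p} according as $D_0u_{p-2}^{2}\equiv0,\ 9a_0^{2},$ or neither $(mod\ p)$; if $0\leq|D|_p<1$, then $x^{3}+a_0x-b_0$ has a double but not a triple root mod $p$ (a triple root would force $a_0=b_0=\bar 0$), say $(x-\bar r)^{2}(x-\bar s)$ with $\bar s=-2\bar r$ and $\bar r,\bar s,\bar r-\bar s\neq\bar 0$; lifting the simple root $\bar s$ to $s\in\mathbb{Z}_p^{*}$ and dividing, $x^{3}+ax-b=(x-s)(x^{2}+sx+(s^{2}+a))$, where the residual quadratic has discriminant $\delta=-3s^{2}-4a$ with $D=(3s^{2}+a)^{2}\delta$ and $3s^{2}+a\equiv9\bar r^{2}$ a unit, so $|D|_p=|\delta|_p$ and $\sqrt{D}-\exists\Leftrightarrow\sqrt{\delta}-\exists$; therefore $\mathbf{N}_{\mathbb{Z}_p^{*}}=3$ (counting multiplicity) when $\sqrt{D}-\exists$ — the quadratic splitting into two units $\equiv-s/2\equiv\bar r$, or, if $D=0$, into a double unit root — and $\mathbf{N}_{\mathbb{Z}_p^{*}}=1$ when $0<|D|_p<1$ and $\sqrt{D}-\not\exists$. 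Taking $k=0$ this is already the first table.

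For the second step, the computation carried out inside the proof of Theorem~\ref{CriteriaforSolutions} shows that, applied to $y^{3}+A_ky=B_k$, the four solvability patterns of part~$I$ each fix $k$: pattern $(i)$ forces $k=\tfrac13\log_p|b|_p$ and applies precisely when $|a|_p^{3}<|b|_p^{2}$ and $\sqrt[3]{b}-\exists$; pattern $(ii)$ forces $k=\tfrac12\log_p|a|_p$ and applies when $|a|_p^{3}>|b|_p^{2}$ and $\sqrt{-a}-\exists$ (then contributing $2$ roots); pattern $(iii)$ forces $k=\tfrac12\log_p|a|_p=\tfrac13\log_p|b|_p$ and applies when $|a|_p^{3}=|b|_p^{2}$ (then contributing $\mathbf{N}_{\mathbb{Z}_p^{*}}(y^{3}+a^{*}y-b^{*})$, read off step one via $D$); pattern $(iv)$ forces $k=\log_p|b|_p-\log_p|a|_p$ and applies when $|a|_p^{3}>|b|_p^{2}$, $|a|_p\neq|b|_p$ (then contributing $1$). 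These patterns are mutually exclusive at a fixed $k$, but across $k$ only $(ii)$ and $(iv)$ can coexist — precisely when $|b|_p^{2}<|a|_p^{3}$ and $\sqrt{-a}-\exists$, in which case the two $k$ are distinct and together give $3$. Intersecting each $k$--condition with $\mathbb{N}_{-}$, resp.\ $\mathbb{N}$, and attaching the multiplicities from step one yields the second, resp.\ third, table; summing the three tables gives the fourth, whose total must never exceed $3$ — a built-in consistency check.

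The main obstacle is the repeated-root regime $0\leq|D|_p<1$ of step one: plain Hensel handles only the simple residual root, so one must peel it off and treat the residual quadratic, in particular pinning $|\delta|_p$ and the square class of $\delta$ to $|D|_p$ and $\sqrt{D}-\exists$ through the unit $3s^{2}+a\equiv9\bar r^{2}$, and dealing with the degenerate case $D=0$ (a genuine double root in $\mathbb{Z}_p^{*}$). The remaining delicate point is the enumeration in step two: checking that no admissible $k$ is missed and that the single overlap of patterns $(ii)$ and $(iv)$ — the source of the triple-root entries carrying the hypothesis $\sqrt{-a}-\exists$ in the $\Delta_3$ region of Theorem~\ref{MainResult2} — is counted with multiplicity exactly $3$.
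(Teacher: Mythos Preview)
Your approach is essentially the paper's: reduce via Corollary~\ref{Aisets} to the $\mathbb{Z}_p^{*}$--count of the scaled equations $y^{3}+A_ky=B_k$, determine the (finitely many) admissible $k$ in each $\mathbb{K}$ from the four regimes of Proposition~\ref{conditionforZ^{*}_p}, and add. The paper likewise treats only $\mathbb{Z}_p\setminus\mathbb{Z}_p^{*}$ in detail (your step two with $\mathbb{K}=\mathbb{N}_{-}$), splitting the region $|b|_p^{2}<|a|_p^{3}$, $|b|_p<|a|_p$ into the same subsets $\nabla_1,\nabla_2,\nabla_3$ that encode your overlap of patterns $(ii)$ and $(iv)$.

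The one genuine difference is your first step. The paper simply cites \cite{FMBOMS} for $\mathbf{N}_{\mathbb{Z}_p^{*}}$ and $\mathbf{N}_{\mathbb{Q}_p}$, whereas you supply a self-contained argument, in particular for the repeated-root regime $|a|_p=|b|_p=1$, $0\le|D|_p<1$: lift the simple residual root $\bar s=-2\bar r$ to $s$, factor $x^{3}+ax-b=(x-s)\bigl(x^{2}+sx+(s^{2}+a)\bigr)$, and use the identity $D=(3s^{2}+a)^{2}\delta$ with $3s^{2}+a\equiv 9\bar r^{2}$ a unit square to transfer $|\,\cdot\,|_p$ and the square-class from $\delta$ to $D$. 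This is correct (indeed $a_0\equiv-3\bar r^{2}$ from the mod~$p$ factorization, giving $3s^{2}+a\equiv12\bar r^{2}-3\bar r^{2}=9\bar r^{2}$), and it buys you independence from \cite{FMBOMS} at the cost of a short extra computation; the paper's citation buys brevity. Either way the second step is the same.
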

\begin{proof}
The numbers $\mathbf{N}_{\mathbb{Z}_p^{*}}(x^3+ax-b)$ and $\mathbf{N}_{\mathbb{Q}_p}(x^3+ax-b)$ of roots of the cubic equation \eqref{3.1} were already studied in \cite{FMBOMS}. Therefore, we just study $\mathbf{N}_{\mathbb{Z}_p\setminus\mathbb{Z}_p^{*}}(x^3+ax-b)$. The case $\mathbf{N}_{\mathbb{Q}_p\setminus\mathbb{Z}_p}(x^3+ax-b)$ is similar to the case $\mathbf{N}_{\mathbb{Z}_p\setminus\mathbb{Z}_p^{*}}(x^3+ax-b)$ 

The cubic equation \eqref{3.1} is solvable in $\mathbb{Z}_p\setminus\mathbb{Z}^{*}_p$ if and only if either one of conditions of Theorem \ref{CriteriaforSolutions} holds true. We want to find the number of solutions in every case.

Case 1: $|a|_p^3<|b|_p^2<1$ and $\sqrt[3]{b}-\exists$. In this case, we showed that the number of  solutions of the equation \eqref{3.1} in the domain $\mathbb{Z}_p\setminus\mathbb{Z}^{*}_p$ is the same as the number of solutions of the following equation in the domain $\mathbb{Z}_p^{*}$
\begin{eqnarray}\label{II.1}
y^3+a\sqrt[3]{|b|_p^2}y=b^{*}.
\end{eqnarray}

Then it is clear that $\left|a\sqrt[3]{|b|_p^2}\right|_p<\left|b^{*}\right|_p=1$ and $b_0^\frac{p-1}{(3,p-1)} \equiv 1 \ (mod \ p)$. As we already proved that if $p\equiv 1 \ (mod \ 3)$ then the equation \eqref{II.1} has 3 distinct solutions in $\mathbb{Z}_p^{*}$ and if $p\equiv 2 \ (mod \ 3)$ then the equation \eqref{II.1} has a unique solution in $\mathbb{Z}_p^{*}$.

Consequently, if $|a|_p^3<|b|_p^2 < 1,$ $p\equiv 1 \ (mod \ 3),$ and $\sqrt[3]{b}-\exists$ then the depressed cubic equation \eqref{3.1} has 3 distinct solutions in $\mathbb{Z}_p\setminus\mathbb{Z}^*_p$ and if $|a|_p^3<|b|_p^2 < 1,$ $p\equiv 2 \ (mod \ 3)$, and $\sqrt[3]{b}-\exists$ then the depressed cubic equation \eqref{3.1} has a unique solution in $\mathbb{Z}_p\setminus\mathbb{Z}^*_p.$

Case 2: $|a|_p^3=|b|_p^2 < 1$ and $D_0u_{p-2}^2\not\equiv 9a_0^{2} \ (mod \ p).$ In this case, we showed that the number of  solutions of the cubic equation \eqref{3.1} in the domain $\mathbb{Z}_p\setminus\mathbb{Z}^*_p$ is the same as the number of solutions of the following equation in the domain $\mathbb{Z}_p^{*}$
\begin{eqnarray}\label{II.2}
y^3+a^{*}y=b^{*}.
\end{eqnarray}

It is clear that $|a^{*}|_p=|b^{*}|_p=1$ and $D_0u_{p-2}^2\not\equiv 9a_0^{2} \ (mod \ p).$ 

Consequently, under the constrain of Case 2, the cubic equation \eqref{3.1} has 3 solutions if and only if one of the following conditions holds true: (i) $|a|_p^3=|b|_p^2 < 1,$ \ $0\leq|D|_p<1,$ and $\sqrt{D}-\exists$ or (ii) $|a|_p^3=|b|_p^2 < 1,$ $|D|_p=1,$ and $D_0u_{p-2}^2\equiv 0 \ (mod \ p)$. Moreover, under the constrain of Case 2, the cubic equation \eqref{3.1} has a unique solution if and only if one of the following conditions holds true: (i) $|a|_p^3=|b|_p^2 < 1,$ \ $0<|D|_p<1,$ and $\sqrt{D}-\not\exists$ or (ii) $|a|_p^3=|b|_p^2 < 1$, \ $|D|_p=1,$ and $D_0u_{p-2}^2\not\equiv 0,9a_0^{2} \ (mod \ p).$

Case 3: $|b|_p^2<|a|_p^3$ and $|b|_p < |a|_p.$ Let us define the following sets
\begin{eqnarray*}
\nabla&=&\left\{(a,b)\in \mathbb{Q}_p^2: \ |b|_p^2<|a|_p^3, \ |b|_p < |a|_p \right\},\\
\nabla_1&=&\left\{(a,b)\in \mathbb{Q}_p^2: |b|_p^2<|a|_p^3 < 1, \ \sqrt{-a}-\exists\right\},\\
\nabla_2&=&\left\{(a,b)\in \mathbb{Q}_p^2: |b|_p^2<|a|_p^3 < 1, \ \sqrt{-a}-\not\exists \right\},\\
\nabla_3&=&\{(a,b)\in \mathbb{Q}_p^2: \ |b|_p^2<|a|_p^3, \ |b|_p < |a|_p, \ |a|_p\geq1\}.
\end{eqnarray*}
One can easily check that
$$
\nabla=\nabla_1\cup\nabla_2\cup\nabla_3\\
$$

In the domain $\nabla_1$, we showed that the number of  solutions of the depressed cubic equation \eqref{3.1} in the domain $\mathbb{Z}_p\setminus\mathbb{Z}^*_p$ is the same as the total number of solutions of the following two equations in the domain $\mathbb{Z}_p^{*}:$
\begin{eqnarray}
\label{II.3.1}y^3+a^{*}y=b\sqrt{|a|_p^3}, \\
\label{II.3.2}z^3+a\left|\frac{b}{a}\right|_p^2z=b\left|\frac{b}{a}\right|_p^3,
\end{eqnarray}
It is clear that $\left|b\sqrt{|a|_p^3}\right|_p<|a^{*}|_p=1$, \ $\sqrt{-a^{*}}-\exists$ and $\left|a\left|\frac{b}{a}\right|_p^2\right|_p=\left|b\left|\frac{b}{a}\right|_p^3\right|_p>1$.

In this case, as we already proved, the equation \eqref{II.3.1} has 2 distinct solutions in $\mathbb{Z}_p^{*}$ and the equation \eqref{II.3.2} has a unique solution in $\mathbb{Z}_p^{*}$. Consequently, the cubic equation \eqref{3.1} has 3 solutions in $\mathbb{Z}_p\setminus\mathbb{Z}^*_p$.

In the case $\nabla_2\cup\nabla_3$, we showed that the number of  solutions of the depressed cubic equation \eqref{3.1} in the domain $\mathbb{Z}_p\setminus\mathbb{Z}^*_p$ is the same as the number of solutions of the equation \eqref{II.3.2} in the domain $\mathbb{Z}_p^{*}.$ We know that the equation \eqref{II.3.2} has a unique solution in $\mathbb{Z}_p^{*}$. Consequently, the depressed cubic equation \eqref{3.1} has a unique solution in $\mathbb{Z}_p\setminus\mathbb{Z}^*_p$. This completes the proof. 
\end{proof}

\begin{remark}
Let us turn back to the second problem which was posted in the Section \ref{ProblemsandResults}. By means of Theorem \ref{Numbers} and Theorem 4.1 of the paper \cite{FMBOMS}, we can say that the problems concerning the number of roots of the cubic equation \eqref{3.1} in the domains $\mathbb{Z}_p^{*}, \ \mathbb{Z}_p\setminus\mathbb{Z}_p^{*}, \ \mathbb{Z}_p, \ \mathbb{Q}_p\setminus\mathbb{Z}_p, \ \mathbb{Q}_p$ are solved. Since $\mathbb{Q}_p\setminus\mathbb{Z}_p^{*}= (\mathbb{Z}_p\setminus\mathbb{Z}_p^{*})\cup (\mathbb{Q}_p\setminus\mathbb{Z}_p)$ and $\mathbb{Q}_p\setminus\left(\mathbb{Z}_p\setminus\mathbb{Z}_p^{*}\right)=\mathbb{Z}_p^{*}\cup (\mathbb{Q}_p\setminus\mathbb{Z}_p)$, we can also easily derive the number of roots of the cubic equation \eqref{3.1} in domains  $\mathbb{Q}_p\setminus\mathbb{Z}_p^{*}, \ \mathbb{Q}_p\setminus\left(\mathbb{Z}_p\setminus\mathbb{Z}_p^{*}\right)$ from Theorem \ref{Numbers}. Consequently, Theorem \ref{Numbers} and Theorem 4.1 of the paper \cite{FMBOMS} provide a complete answer for the second problem. The graphical illustration of Theorem \ref{Numbers} is presented in Figs.\ref{figDescription1}-\ref{figDescription6}. This answers the third problem of Section \ref{ProblemsandResults}.
\end{remark}

\section*{Acknowledgments}

The authors acknowledge the IIUM grant EDW B13-029-0914 and the MOE grant ERGS13-025-0058.   The first author (M.S.) is
grateful to the Junior Associate scheme of the Abdus Salam
International Centre for Theoretical Physics, Trieste, Italy.

\end{document}